\newtheorem{satz}{Theorem}[section]
\newtheorem{lemma}{Lemma}[section]
\newtheorem{korollar}{Corollary}[section]
\newcommand{\iR}{\mathbb{R}}
\newcommand{\iN}{\mathbb{N}}
\newcommand{\iC}{\mathbb{C}}
\newcommand{\oH}{\hspace*{0.39em}\raisebox{0.6ex}{\textdegree}\hspace{-0.72em}H}
\DeclareMathOperator*{\esup}{ess\,sup}
\DeclareMathOperator*{\einf}{ess\,inf}
\begin{document}
%*************************************************************************
\begin{center}
{\bf\Large Time fractional diffusion equations: solution concepts, regularity and long-time behaviour}
\end{center}
\begin{center}
{\bf Preprint}
\end{center}
\vspace{0.1em}
\begin{center}
Rico Zacher
\end{center}
\begin{abstract}
In this paper we give a survey of results on various analytical aspects of time fractional diffusion equations.
  We describe the approach via abstract Volterra equations and collect results on strong solutions in the $L_p$ sense.
  We further discuss the concept of weak solutions for equations with rough coefficients and give an account of recent
developments towards a De Giorgi-Nash-Moser theory for such equations. The last part summarizes recent results
on the long-time behaviour of solutions, which turns out to be significantly different from that in the heat equation case.
\end{abstract}
\vspace{0.7em}
\begin{center}
{\bf AMS subject classification:} 35R11; 35K10; 47G20
\end{center}

\noindent{\bf Keywords:} time fractional diffusion, weak solution, strong solution, maximal $L_p$-regularity, H\"older regularity, Harnack inequalities, decay estimates
%*************************************************************************
\section{Introduction}
%********************************************************
The purpose of this paper is to give a survey of results on various analytical aspects of time fractional diffusion equations.
We discuss different solution concepts such as weak solutions and strong $L_p$-solutions and give an account of recent
developments towards a De Giorgi-Nash-Moser theory for such equations. We also describe some very recent results on the
long-time behaviour of solutions, which turns out to be markedly different from that in the classical parabolic case. 

The prototype of the equations we will look at is given by
\begin{equation} \label{TF}
\partial_t^\alpha (u-u_0)-\Delta u=f,\quad t\in (0,T),\,x\in
\Omega.
\end{equation}
Here $T>0$, $\Omega$ is a domain in $\iR^d$ and $u: [0,T]\times \Omega \rightarrow \iR$ is the unknown. 
Further, $\partial_t^\alpha v$ denotes the Riemann-Liouville fractional derivative of order $\alpha\in (0,1)$ w.r.t.\ time.
For (sufficiently smooth) $v:[0,T]\rightarrow \iR$ it is defined by
\[
\partial_t^\alpha v(t)=\partial_t \big(g_{1-\alpha}\ast v\big)(t),
\]
where $\partial_t$ stands for the usual derivative, $g_\beta$ denotes the standard kernel
\[ g_\beta(t)=\frac{t^{\beta-1}}{\Gamma(\beta)},\quad t>0,\;\beta>0,
\]
and $k\ast v$ denotes the convolution on the positive halfline $\iR_+:=[0,\infty)$,
that is $(k\ast v)(t)=\int_0^t k(t-\tau)v(\tau)\,d\tau$, $t\ge 0$. The functions $u_0$ and $f$ are given data;
$u_0$ plays the role of the initial value for $u$, that is 
\begin{equation} \label{ic}
u|_{t=0}=u_0\quad \mbox{in}\; \Omega.
\end{equation}
We point out that for sufficiently smooth $v:[0,T]\rightarrow \iR$,
\begin{equation} \label{Intro1}
\partial_t^\alpha (v-v(0))=g_{1-\alpha}\ast \partial_t v,
\end{equation}
that is, $\partial_t^\alpha (v-v(0))$ coincides with the Caputo fractional derivative of $v$ of order $\alpha$. The formulation on the left-hand side of \eqref{Intro1} has the advantage that it requires less regularity of $v$.

Replacing the Laplacian in \eqref{TF} by a more general elliptic operator of second order (w.r.t. the spatial variables)
leads to the class of problems we will refer to as {\em time fractional diffusion equations}. A considerable part of this paper
will be concerned with the following problem in divergence form
\begin{equation} \label{divTF}
\partial_t^\alpha (u-u_0)-\mbox{div}\,\big(A(t,x)\nabla u\big)=f,\quad t\in (0,T),\,x\in
\Omega,
\end{equation}
where the coefficient matrix $A\in L_\infty((0,T)\times \Omega;\iR^{d\times
d})$ satisfies a uniform parabolicity condition. Here the main problem consists in proving suitable {\em a priori} estimates.
We will explain how these can be obtained and discuss the corresponding (natural) notion of weak solution.

Let us fix some notation. For a Banach space $X$ we denote by ${\cal B}(X)$ the space of all bounded linear operators 
from $X$ into $X$.  For an
interval $J\subset\iR$, $s>0$, $p\in (1,\infty)$ and a UMD space $X$, by $H^s_p(J;X)$
and $B^s_{pp}(J;X)$ we mean the vector-valued Bessel potential
space resp.\ Besov space of $X$-valued functions on
$J$, see e.g.\ \cite{PrSi,Za05}. For $T>0$ and $s\in (0,1]$ we set
${}_0 H^s_p((0,T);X)=\{g_s \ast h:\,h\in L_2(J;X)\}$. Note that for $s\in (1/p,1]$, 
${}_0 H^s_p((0,T);X)=\{v\in H^s_p((0,T);X):\,v(0)=0\}$, cf.\ \cite{Za05}.
%***************************************************************************************
\section{Strong solutions and maximal $L_p$-regularity}
%**********************************************************************************
Suppose for the moment that $\Omega=\iR^d$. Convolving \eqref{TF} with the kernel $g_\alpha$ and using the
identity $g_{\alpha}\ast g_{1-\alpha}=1$ we obtain
\begin{equation} \label{LP1}
u-g_\alpha\ast \Delta u=u_0+g_\alpha\ast f, \quad t\in (0,T),\,x\in \iR^d.
\end{equation}
In fact, for sufficiently smooth $u$ we have
\[
g_\alpha \ast \partial_t\big(g_{1-\alpha}\ast [u-u_0]\big)
=\partial_t\big(g_\alpha\ast g_{1-\alpha}\ast [u-u_0]\big)=u-u_0.
\]
Equation \eqref{LP1} can be viewed as an abstract Volterra equation. Take as base space, e.g., $X=L_q(\iR^d)$ with
$q\in (1,\infty)$ and define the operator $A$ with domain $D(A)=H^2_q(\iR^d)$ by $Av=-\Delta v$, $v\in D(A)$.
Setting $h=u_0+g_\alpha\ast f$, equation  \eqref{LP1} can be reformulated as
\begin{equation} \label{Volterra}
u(t)+(g_\alpha \ast Au)(t)=h(t),\quad t\in [0,T],
\end{equation}
where now $u$ is regarded as an $X$-valued function of time. 

There is a rich theory of abstract Volterra equations that generalizes semigroup theory and applies to our situation,
the standard reference being the monograph by Pr\"uss \cite{JanI}, see also \cite{CLS, CNa, CN, Grip1}. The operator $A$ is a sectorial operator with
spectral angle $0$ and the kernel $g_\alpha$ is completely monotone and sectorial with angle $\alpha \pi/2$. Since the
sum of the two angles is less than $\pi$, the equation is parabolic and thus admits a resolvent family $\big(S(t)\big)_{t\ge 0}
\subset {\cal B}(X)$, which is the solution operator in case $h=u_0$ (that is, $u(t)=S(t)u_0$ solves the problem) and 
which, in case $\alpha=1$, coincides with the $C_0$-semigroup generated by $-A$. Depending on the regularity of $h$,
the results from \cite{JanI} immediately give existence and uniqueness in the classical and mild sense.    

Here, we want to consider strong $L_p$-solutions, that is, we ask for maximal $L_p$-regularity. Given a Banach space $X$
and a closed linear operator $A$ with domain $D(A)\subset X$, the time fractional evolution equation (with $\alpha\in (0,1]$)
\begin{equation} \label{maxreg}
\partial_t^\alpha u(t)+Au(t)=f(t) ,\quad t\in J:=(0,T),
\end{equation}
is said to have the property of {\it maximal $L_p$-regularity}, if for each $f\in L_p(J;X)$ equation \eqref{maxreg} possesses a unique solution $u$ in the space ${}_0H^\alpha_p(J;X)\cap L_p(J;D_A)$,
i.e.\ both terms on the left-hand side of \eqref{maxreg} belong to $L_p(J;X)$; here $D_A$ denotes the domain of $A$ equipped with the graph norm. 

In the case $\alpha=1$, important contributions on maximal $L_p$-regularity have been made by
Weis \cite{Weis}, who established an operator-valued version of the Mikhlin Fourier multiplier theorem, and by  Denk, Hieber and Pr\"uss \cite{DHP1}. We also refer to the monograph by Pr\"uss and Simonett \cite{PrSi}. The case $\alpha\in (0,1)$ has been intensively studied by
the author \cite{Za05,Za06}; we also refer to \cite{JanI}. Applying the abstract theory from \cite[Theorem 3.4, Theorem 3.6]{Za05} to the time fractional diffusion equation \eqref{TF} in the full space $\iR^d$ and using results on the interpolation of
Sobolev spaces (see e.g.\ \cite{Trie} and \cite{DHP2}) we obtain the following result.
%*************************************************************************
\begin{satz} \label{MR1}
Let $p,q\in (1,\infty)$ and $\alpha\in (\frac{1}{p},1)$. Then the problem \eqref{TF}, \eqref{ic} with $\Omega=\iR^d$ admits a unique
solution
\[
u\in Z:=H^\alpha_p(J;L_q(\iR^d))\cap L_p(J;H^2_q(\iR^d)),
\]
if and only if $f\in L_p(J;L_q(\iR^d))$ and $u_0\in B_{qp}^{2-\frac{2}{p\alpha}}(\iR^d)$. Furthermore, we have the continuous embedding
\[
Z \hookrightarrow C([0,T];B_{qp}^{2-\frac{2}{p\alpha}}(\iR^d)).
\]
\end{satz}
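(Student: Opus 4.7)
The plan is to recast \eqref{TF}--\eqref{ic} as the abstract Volterra equation
\[
u+g_\alpha\ast Au = u_0+g_\alpha\ast f
\]
with $X=L_q(\iR^d)$, $A=-\Delta$, $D(A)=H^2_q(\iR^d)$, and then apply the abstract maximal $L_p$-regularity theory of \cite{Za05}. For $q\in (1,\infty)$ the space $L_q(\iR^d)$ is UMD, and $-\Delta$ admits a bounded $H^\infty$-calculus of angle $0$ on it; combined with the fact that $g_\alpha$ is completely monotone and sectorial of angle $\alpha\pi/2<\pi$, this places us in the setting of \cite[Theorems 3.4 and 3.6]{Za05}. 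For zero initial data that theorem yields, for every $f\in L_p(J;L_q(\iR^d))$, a unique solution $v\in {}_0H^\alpha_p(J;L_q(\iR^d))\cap L_p(J;H^2_q(\iR^d))$ of \eqref{maxreg} with $A=-\Delta$.

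The second step is to pass from the trivial initial value to the general case. Since $\alpha>1/p$, a vector-valued trace theorem applied to the intersection space $H^\alpha_p(J;L_q(\iR^d))\cap L_p(J;H^2_q(\iR^d))$ identifies the time trace at $t=0$ as an element of the real interpolation space $\big(L_q(\iR^d),H^2_q(\iR^d)\big)_{1-\frac{1}{p\alpha},p}$. Invoking the standard interpolation identity (see \cite{Trie, DHP2})
\[
\big(L_q(\iR^d),H^2_q(\iR^d)\big)_{\theta,p}=B_{qp}^{2\theta}(\iR^d),\qquad \theta\in (0,1),
\]
with $\theta=1-1/(p\alpha)$ yields both the necessity of $u_0\in B_{qp}^{2-2/(p\alpha)}(\iR^d)$ and the embedding $Z\hookrightarrow C([0,T];B_{qp}^{2-2/(p\alpha)}(\iR^d))$.

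For sufficiency, given $u_0\in B_{qp}^{2-2/(p\alpha)}(\iR^d)$, I would subtract a suitable lift. The resolvent family $\big(S(t)\big)_{t\ge 0}$ discussed in Section~2 produces $u_*(t):=S(t)u_0\in Z$ with $u_*(0)=u_0$; the relevant maximal-regularity estimate for $S(\cdot)u_0$ is itself part of \cite[Theorem 3.6]{Za05}. Setting $v=u-u_*$ reduces the problem to
\[
\partial_t^\alpha v -\Delta v = f+\Delta u_* \in L_p(J;L_q(\iR^d)),\qquad v(0)=0,
\]
which is uniquely solvable in ${}_0H^\alpha_p(J;L_q(\iR^d))\cap L_p(J;H^2_q(\iR^d))$ by the first step. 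Then $u=v+u_*\in Z$ is the required solution, and uniqueness follows by running the reduction in reverse.

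The main technical obstacle is the trace characterization: one must show that elements of the anisotropic space ${}_0H^\alpha_p(J;L_q(\iR^d))\cap L_p(J;H^2_q(\iR^d))$ have a well-defined trace at $t=0$ and that the trace space is exactly the real interpolation couple above. The standard route proceeds via a mixed-derivative theorem, which in the time-fractional setting requires a joint operator-valued functional calculus of the Riemann--Liouville derivative and $-\Delta$; this is the technical heart of \cite{Za05} and is where the UMD property of $L_q(\iR^d)$ enters in an essential way.
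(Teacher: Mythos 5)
Your proposal follows essentially the same route as the paper, which simply derives Theorem \ref{MR1} by recasting \eqref{TF}, \eqref{ic} as the abstract Volterra equation \eqref{Volterra}, invoking the maximal $L_p$-regularity theory of \cite[Theorems 3.4 and 3.6]{Za05} for the sectorial operator $-\Delta$ on the UMD space $L_q(\iR^d)$ with the sectorial kernel $g_\alpha$, and identifying the trace space via the real interpolation identity $\big(L_q(\iR^d),H^2_q(\iR^d)\big)_{1-\frac{1}{p\alpha},p}=B_{qp}^{2-\frac{2}{p\alpha}}(\iR^d)$ from \cite{Trie,DHP2}. Your additional details (the resolvent-family lift for the initial value and the reduction to zero trace) are consistent with how that abstract theory is applied, so there is nothing to correct.
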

%***************************************************************************
This result extends to second order elliptic operators in non-divergence form under suitable regularity assumptions
on the coefficients like continuity of the top order coeffcients; the case $q=p$ can be found in \cite{Za06}. Note that
the condition $\alpha>1/p$ ensures that functions $u\in Z$ have a time trace with the Besov space $B_{qp}^{2-\frac{2}{p\alpha}}(\iR^d)$ being the natural trace space. We remark that $L_p(L_q)$-estimates for time fractional diffusion equations 
in $\iR^d$ have also been proved recently in \cite{KKL} by PDE methods. 

In the case $q=p$, there also exist corresponding results for problems on domains with nonhomogenous boundary conditions,
see \cite{Za06}. As an example, we formulate such a result for a Dirichlet boundary condition, i.e.
we consider the problem
\begin{equation} \label{maxreg2}
\left\{
\begin{array}{r@{\;=\;}l@{\;}l}
%\begin{array}{r@{\;=\;}l}
\partial_t^\alpha (u-u_0)-\Delta u  & f,\quad & t\in (0,T),\,x\in\Omega\\
u|_{\partial \Omega} & g,\quad & t\in (0,T),\,x\in \partial\Omega\\
u|_{t=0} & u_0,\; & x\in \Omega.
\end{array}
\right.
\end{equation}
%*****************************************************************************
\begin{satz} \label{MR2} \label{MRRand}
Let $\Omega\subset \iR^d$ ($d\ge 2$) be a domain with compact $C^2$-boundary $\partial\Omega$.
Let $J=(0,T)$, $p\in (1,\infty)$ and assume that $\alpha\in (\frac{1}{p},1)\setminus \{\frac{2}{2p-1}\}$.
Then \eqref{maxreg2} possesses a unique solution $u$ in the space 
$H^\alpha_p(J;L_p(\Omega))\cap L_p(J;H^2_p(\Omega))$ if and only if the functions $f$, $g$, $u_0$ are subject to the following conditions.
\begin{align*}
 & (i)\quad f\in L_p(J;L_p(\Omega)),\quad  (ii) \quad g\in B_{pp}^{\alpha(1-\frac{1}{2p})}(J;L_p(\partial
\Omega))\cap L_p(J;B_{pp}^{2-\frac{1}{p}}(\partial\Omega)),\\
& (iii) \quad u_0\in B_{pp}^{2-\frac{2}{p\alpha}}(\Omega), \quad  (iv) \quad
g|_{t=0}=u_0|_{\partial \Omega}\;\;\mbox{if}\;\;\alpha>\frac{2}{2p-1}.
\end{align*} 
\end{satz}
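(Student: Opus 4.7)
The plan is to follow the standard program for maximal $L_p$-regularity of elliptic boundary value problems, adapted to the time-fractional setting already treated for the full-space case in Theorem \ref{MR1}: first establish necessity of (i)--(iv) via trace theorems, then prove sufficiency by reducing to canonical model problems on $\iR^d$ and $\iR^d_+$ with vanishing data. Throughout, I write $Z=H^\alpha_p(J;L_p(\Omega))\cap L_p(J;H^2_p(\Omega))$ for the solution space.

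For necessity, (i) is immediate. Condition (iii) follows from the time-trace embedding in Theorem \ref{MR1}, transferred to $\Omega$ by restriction. For (ii) I would combine the spatial trace on $\partial\Omega$ applied fibrewise in time with the mixed-derivative interpolation $H^\alpha_p(J;L_p(\Omega))\cap L_p(J;H^2_p(\Omega))\hookrightarrow H^{\alpha(1-\theta)}_p(J;H^{2\theta}_p(\Omega))$ for $\theta\in[0,1]$; choosing $\theta$ just above the level where the spatial trace exists yields the time regularity $B_{pp}^{\alpha(1-1/(2p))}(J;L_p(\partial\Omega))$, and the real interpolation inherent in the trace produces the Besov scale. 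The threshold $\alpha=2/(2p-1)$ in (iv) arises because $\alpha(1-1/(2p))>1/p$ precisely when $\alpha>2/(2p-1)$; above this threshold the time-trace of $g$ at $t=0$ is defined and must coincide with the spatial trace of $u_0$ on $\partial\Omega$.

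For sufficiency I would reduce to the homogeneous problem in three stages. First, extend $u_0$ to $\tilde u_0\in B_{pp}^{2-2/(p\alpha)}(\iR^d)$ and invoke Theorem \ref{MR1} with vanishing right-hand side to produce $u_1\in H^\alpha_p(J;L_p(\iR^d))\cap L_p(J;H^2_p(\iR^d))$ with the prescribed initial value; subtracting $u_1$ reduces to $u_0=0$. Second, construct an extension $u_2\in Z$ of the updated boundary datum $g-u_1|_{\partial\Omega}$ with vanishing initial trace; this is the core trace-lifting step and relies on the characterization in (ii). Third, the resulting problem with $u_0=0$, $g=0$, $\tilde f\in L_p(J;L_p(\Omega))$ is solved by a finite partition of unity: interior patches yield whole-space problems handled directly by Theorem \ref{MR1}, while boundary patches, after flattening $\partial\Omega$ using the $C^2$-regularity hypothesis, give half-space problems with homogeneous Dirichlet data. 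These are solved either by odd reflection across $\partial\iR^d_+$ (reducing once more to Theorem \ref{MR1} on $\iR^d$, with the lower-order perturbations generated by the diffeomorphism absorbed via a Neumann series on small time intervals) or by direct operator-valued Fourier-Laplace multiplier analysis in the spirit of \cite{Za05}.

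The main obstacle is the trace-lifting step: given $g$ in the anisotropic space of (ii) satisfying the compatibility condition (iv), to build an element of $Z$ whose boundary trace is $g$ and whose initial trace vanishes. This requires the vector-valued Besov/Bessel-potential space machinery from \cite{Trie,DHP2}, and it is precisely here that the value $\alpha=2/(2p-1)$ must be excluded: at this critical exponent the time-trace at $t=0$ is borderline, the natural extension operator ceases to be bounded uniformly, and the compatibility condition degenerates into a subtle endpoint statement that cannot be enforced by a simple cut-off in time.
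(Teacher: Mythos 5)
Your proposal is correct and follows essentially the same route as the paper, which itself only sketches the argument (deferring details to \cite{Za06}): localization and perturbation to reduce to model problems on $\iR^d$ and $\iR^d_+$, treated by operator-theoretic (Fourier--Laplace multiplier) methods, with the trace and compatibility analysis accounting for conditions (i)--(iv) and the excluded value $\alpha=\frac{2}{2p-1}$.
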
 
%*****************************************************************************
To prove this theorem, one can use the localization method and perturbation arguments to reduce the problem to related problems on the full space $\iR^d$ and the half space 
$\iR^d_+=\{(x',y)\in \iR^d: x'\in \iR^{d-1}, y>0\}$. These problems in turn can then be treated by means of operator theoretic methods, see \cite{Za06}.
%***************************************************************************************
\section{Weak solutions in the Hilbert space setting} \label{weaksetting}
%**********************************************************************************
We turn now to weak solutions. Let $T>0$ and $\Omega$ be a bounded domain in $\iR^d$. We consider the problem
%******************************************************************************
\begin{equation} \label{weakProb}
\left\{
\begin{array}{r@{\;=\;}l@{\;}l}
%\begin{array}{r@{\,}l@{\,}l}
\partial_t^\alpha (u-u_0)-\mbox{div}\,(A\,\nabla u)+cu &  f,\quad &
 t\in (0,T),\,x\in \Omega\\
u|_{\partial\Omega} &  0, \quad &  t\in (0,T),\,x\in \partial\Omega\\
u|_{t=0} &  u_0,\quad &  x\in \Omega.
\end{array}
\right.
\end{equation}
%*****************************************************************************
The coefficients and data are supposed to satisfy the following assumptions.
\begin{itemize}
\item [{\bf (Hd)}] $\; u_0\in L_2(\Omega)$, $f\in L_2((0,T);L_2(\Omega))$, $c\in L_\infty((0,T)\times
\Omega)$.
\item [{\bf (HA)}] $\;A\in L_\infty((0,T)\times \Omega;\iR^{d\times
d})$, and there exists a $\nu>0$ such that
\[
(A(t,x)\xi|\xi)\ge \nu|\xi|^2,\quad \mbox{for a.a.}\;t\in
(0,T),\,x\in \Omega,\,\mbox{and all}\,\xi\in \iR^d.
\]
\end{itemize}
Here $(\cdot|\cdot)$ denotes the standard scalar product in $\iR^d$. 

In what follows we denote by $y_+$ and $y_-:=[-y]_+$ the positive and negative part, respectively, of $y\in \iR$. We say that $u$ is a {\em weak solution (subsolution, supersolution)} of \eqref{weakProb} if
\begin{itemize}
\item[(a)] $u\in
W:=\{w\in  L_2((0,T);H^1_2(\Omega)):\;g_{1-\alpha}\ast w\in C([0,T];L_2(\Omega))\;\mbox{and}\;
(g_{1-\alpha}\ast w)|_{t=0}=0\}$,
\item[(b)]
$u\,\big(u_+,\; u_-\big) \in L_2((0,T);\oH^1_2(\Omega))$, where $\oH^1_2(\Omega):=\overline{C_0^\infty(\Omega)}\,{}^{H^1_2(\Omega)}$,
\item[(c)] for any nonnegative test function
\[
\eta\in H^1_2((0,T);L_2(\Omega))\cap
L_2((0,T);\oH^1_2(\Omega))
\]
with $\eta|_{t=T}=0$ there holds
\[
\int_{0}^{T} \int_\Omega \Big(-\eta_t \big(g_{1-\alpha}\ast [u-u_0]\big)+
(A\nabla u|\nabla \eta)+cu\eta \Big)\,dx\,dt=\,(\le,\,\ge)\,\int_0^T \int_\Omega f\eta \,dx\,dt.
\]
\end{itemize}
The following theorem is due to the author, see \cite[Section 4]{ZWH}. Here the symbol $L_{p,\infty}$
refers to the weak $L_p$ space and $H^{-1}_2(\Omega)$
denotes the dual space of $\oH^1_2(\Omega)$.
%***********************************************************************
\begin{satz} \label{weaksol}
Let $T>0$ and $\Omega$ be a bounded domain in $\iR^d$. Let $\alpha\in (0,1)$ and assume
that (Hd) and (HA) hold. Then the problem \eqref{weakProb} has a unique weak solution $u\in W$ and 
\[
|g_{1-\alpha} \ast u|_{C([0,T];L_2(\Omega))}+|u|_{ L_2((0,T);H^1_2(\Omega))}\le C\big (|u_0|_{L_2(\Omega)}+
|f|_{L_2((0,T);L_2(\Omega))}\big),
\]
where the constant $C$ is independent of $u$, $u_0$, and $f$. Moreover, we have
\begin{equation} \label{extrareg}
u\in L_{\frac{2}{1-\alpha},\infty}((0,T);L_2(\Omega))\quad\mbox{and}\quad u-u_0\in {}_0 H^\alpha_2
((0,T);H_2^{-1}(\Omega)).
\end{equation}
\end{satz}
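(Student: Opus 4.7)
The plan is to build $u$ as the limit of a Galerkin approximation. Let $\{\phi_k\}_{k\ge 1}$ be the $L_2(\Omega)$-orthonormal basis of eigenfunctions of the Dirichlet Laplacian on $\Omega$, set $V_n=\mathrm{span}\{\phi_1,\dots,\phi_n\}$, and let $u_0^n$ denote the $L_2$-projection of $u_0$ onto $V_n$. I look for $u_n(t,x)=\sum_{k=1}^n c_k^n(t)\phi_k(x)$ by imposing \eqref{weakProb} tested against each $\phi_j$. Convolving the resulting scalar identities with $g_\alpha$ and using $g_\alpha\ast g_{1-\alpha}=1$ recasts the system as a linear Volterra integral equation of the second kind for $c^n\in C([0,T];\iR^n)$, whose unique solvability is standard, see \cite{JanI}.

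The heart of the proof is a uniform a priori estimate for $u_n$. The essential tool is the pointwise convexity inequality
\[
v(t)\,\partial_t^\alpha\!\big(v-v(0)\big)(t)\;\ge\;\tfrac12\,\partial_t^\alpha\!\big(v^2-v(0)^2\big)(t)
\]
valid for sufficiently regular scalar $v$, cf.\ \cite{ZWH}. Testing the Galerkin equation with $u_n$, integrating over $\Omega$, exploiting (HA) for the principal part and absorbing $cu_n$ by Young's inequality yields
\[
\tfrac12\,\partial_t^\alpha\!\big(|u_n|_{L_2(\Omega)}^2-|u_0^n|_{L_2(\Omega)}^2\big)+\tfrac{\nu}{2}|\nabla u_n|_{L_2(\Omega)}^2\;\le\;K|u_n|_{L_2(\Omega)}^2+\tfrac{1}{\nu}|f|_{L_2(\Omega)}^2,
\]
with $K=K(\|c\|_\infty,\nu)$. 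Convolving with $g_\alpha$ and applying a fractional Gronwall lemma (see \cite[Section 4]{ZWH}) produces uniform bounds on $|u_n|_{L_\infty((0,T);L_2(\Omega))}$ and on $|u_n|_{L_2((0,T);H^1_2(\Omega))}$ of the form asserted in the statement; in particular $g_{1-\alpha}\ast\cdot$ being bounded from $L_\infty$ into $C([0,T])$ then gives the claimed estimate for $|g_{1-\alpha}\ast u_n|_{C([0,T];L_2(\Omega))}$.

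Extracting a subsequence with $u_n\rightharpoonup u$ in $L_2((0,T);\oH^1_2(\Omega))$ and weakly-$*$ in $L_\infty((0,T);L_2(\Omega))$, I pass to the limit in the weak formulation; the only delicate term is the nonlocal one $-\int_0^T\!\!\int_\Omega \eta_t\,g_{1-\alpha}\ast(u_n-u_0^n)\,dx\,dt$, which is handled by shifting the convolution onto $\eta_t$ (a bounded operation on $L_2((0,T);L_2(\Omega))$) and using weak convergence of $u_n$. The initial trace condition $(g_{1-\alpha}\ast[u-u_0])|_{t=0}=0$ is read off a posteriori from the regularity claim $u-u_0\in{}_0H^\alpha_2((0,T);H^{-1}_2(\Omega))$ established below. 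Uniqueness follows by linearity: the difference $w$ of two weak solutions solves the homogeneous equation, and after a Steklov-type time regularization one may test with $w$ itself and apply the convexity inequality once more to conclude $g_{1-\alpha}\ast|w|^2_{L_2(\Omega)}\equiv 0$, hence $w\equiv 0$.

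For the extra regularity, rewriting \eqref{weakProb} as $\partial_t^\alpha(u-u_0)=\mathrm{div}(A\nabla u)-cu+f$ and using $u\in L_2((0,T);H^1_2(\Omega))$, $c\in L_\infty$, $f\in L_2((0,T);L_2(\Omega))$ shows the right-hand side lies in $L_2((0,T);H^{-1}_2(\Omega))$, which by definition is $u-u_0\in{}_0H^\alpha_2((0,T);H^{-1}_2(\Omega))$. Interpolating this with $u\in L_2((0,T);\oH^1_2(\Omega))$ via the mixed derivative theorem gives $u-u_0\in{}_0H^{\alpha/2}_2((0,T);L_2(\Omega))$, and the one-dimensional Sobolev embedding ${}_0H^{\alpha/2}_2(0,T)\hookrightarrow L_{2/(1-\alpha),\infty}(0,T)$ together with $u_0\in L_\infty((0,T);L_2(\Omega))$ produces the claimed weak-type bound $u\in L_{2/(1-\alpha),\infty}((0,T);L_2(\Omega))$. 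The step I anticipate as the principal obstacle is the rigorous deployment of the convexity inequality at the level of weak rather than smooth Galerkin solutions in the uniqueness argument, together with the careful treatment of the initial trace in the class $W$.
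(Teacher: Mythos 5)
Your overall strategy --- Galerkin approximation plus the convexity/fundamental identity to get energy estimates --- is exactly the route the paper takes (it delegates to \cite[Theorem 3.1]{ZWH}, whose proof is Galerkin-based and rests on Lemma \ref{FILemma}). However, there is a genuine error at the central step. From
\[
\tfrac12\,\partial_t^\alpha\big(|u_n|_{L_2}^2-|u_0^n|_{L_2}^2\big)+\tfrac{\nu}{2}|\nabla u_n|_{L_2}^2\le K|u_n|_{L_2}^2+\tfrac{1}{\nu}|f|_{L_2}^2
\]
you convolve with $g_\alpha$ and claim a uniform bound on $|u_n|_{L_\infty((0,T);L_2(\Omega))}$. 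This cannot work: the forcing term becomes $g_\alpha\ast|f(\cdot)|_{L_2}^2$, and since $f\in L_2(L_2)$ only gives $|f|_{L_2}^2\in L_1(0,T)$, while $g_\alpha\in L_{\frac{1}{1-\alpha},\infty}(0,T)$, the convolution lies merely in $L_{\frac{1}{1-\alpha},\infty}(0,T)$, not in $L_\infty(0,T)$. Indeed the conclusion is false: for $\alpha<\tfrac12$, $u_0=0$, $A=I$ and $f(t,x)=h(t)\phi_1(x)$ with a suitable $h\in L_2(0,T)$, the solution $u(t,x)=\big(\int_0^t(t-s)^{\alpha-1}E_{\alpha,\alpha}(-\lambda_1(t-s)^\alpha)h(s)\,ds\big)\phi_1(x)$ is unbounded in $L_2(\Omega)$. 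This is precisely why the theorem asserts only $u\in L_{\frac{2}{1-\alpha},\infty}((0,T);L_2(\Omega))$ and controls $g_{1-\alpha}\ast u$ (not $u$ itself) in $C([0,T];L_2(\Omega))$. The correct reading of your own energy inequality, via the weak Young/O'Neil inequality, yields exactly the weak-type bound $|u_n|_{L_2(\Omega)}\in L_{\frac{2}{1-\alpha},\infty}(0,T)$; the compactness argument and the $C([0,T];L_2(\Omega))$ estimate for $g_{1-\alpha}\ast u$ then have to be rebuilt without the $L_\infty(L_2)$ crutch (the latter follows from $g_{1-\alpha}\ast u=g_{2-\alpha}\ast u_0 + 1\ast(\dots)$ after convolving the equation, or from an O'Neil-type pairing of $g_{1-\alpha}\in L_{\frac{1}{\alpha},\infty}$ with $u\in L_{\frac{2}{1-\alpha},\infty}(L_2)$).

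A second, smaller issue: you apply the convexity inequality with the singular kernel $g_{1-\alpha}$ directly to the Galerkin solutions and, in the uniqueness step, essentially to the weak solution itself after an unspecified Steklov regularization. The paper's mechanism for making this rigorous is to replace $g_{1-\alpha}$ by its Yosida approximations $g_{1-\alpha,n}=n s_n\in H^1_1(0,T)$, which are nonnegative and nonincreasing, so that Lemma \ref{FILemma} and Corollary \ref{convexFI} apply without any smoothness of $u$ beyond $L_2$, and then to pass to the limit $n\to\infty$ in the regularized weak formulation. You correctly identify this as the principal obstacle, but as written it remains a gap rather than a proof. The derivation of $u-u_0\in{}_0H^\alpha_2((0,T);H_2^{-1}(\Omega))$ from the equation, and the subsequent interpolation, are fine (the latter in fact reproduces the paper's improvement to strong $L_{\frac{2}{1-\alpha}}$ integrability).
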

%***********************************************************************
Note that $u\in W$ does not entail
$u\in C([0,T];L_2(\Omega))$ in general, so it is not so clear how to interpret the
initial condition. However, once one knows that the solution $u$ is sufficiently smooth (e.g. if $\alpha>\frac{1}{2}$), then $u|_{t=0}=u_0$ is satisfied in an appropriate sense (see \cite{ZWH}). We also point out
that the statement of Theorem \ref{weaksol} remains true, if we only assume that $f\in L_2((0,T); H_2^{-1}(\Omega))$; the integral $\int_\Omega f\eta \,dx$ in the weak formulation above then has to be replaced
by the duality pairing $\langle
f,\eta \rangle$ between $H_2^{-1}(\Omega)$ and  $\oH^1_2(\Omega)$.

The first statement in \eqref{extrareg} follows from considerations for more general problems (cf.\ 
\cite{ZWH}) and can be
slightly improved in the time fractional case. In fact, the solution $u$ even enjoys the property
\begin{equation} \label{extrareg2}
u\in L_{\frac{2}{1-\alpha}}((0,T);L_2(\Omega)),
\end{equation}
which is also in accordance with the estimates in \cite{ACV2} for weak solutions of bifractional porous
medium equations, see also \cite{ACV}. To see \eqref{extrareg2}, we use Theorem \ref{weaksol}, cross interpolation 
(see e.g.\ the mixed derivative theorem in \cite{Sob}) and Sobolev embedding, thereby obtaining that
\begin{align*}
u\in &  H^\alpha_2((0,T);H_2^{-1}(\Omega))\cap L_2((0,T);H^1_2(\Omega)) \\
& \hookrightarrow
H^{\frac{\alpha}{2}}_2((0,T);L_2(\Omega)) \hookrightarrow L_{\frac{2}{1-\alpha}}((0,T);L_2(\Omega)).
\end{align*}

Theorem \ref{weaksol} follows from a rather general result on weak solutions for abstract evolutionary
integro-differential equations in Hilbert spaces (see \cite[Theorem 3.1]{ZWH}), which is the non-local in time analogue of the classical result on weak solutions for abstract parabolic equations given via
a bounded and coercive bilinear form, cf. e.g. Theorem 4.1 and Remark 4.3 in Chapter 4 in Lions and Magenes \cite{LM} or Zeidler
\cite[Section 23]{ZeidlerII}. The theory from \cite{ZWH} covers a wide range of non-local in time subdiffusion problems, including also problems with sums of fractional derivatives and ultra-slow diffusion
equations (cf.\ \cite{Koch08}) and with other boundary conditions like a Neumann boundary condition.  

The proof of Theorem 3.1 in \cite{ZWH} is based on the Galerkin method and
suitable {\em a priori} estimates, which can be derived by means of a basic identity for integro-differential
operators $B$ of the form $Bv=\partial_t(k\ast v)$. 
Before explaining several versions of this so-called {\em fundamental identity} we collect some further
basic results on \eqref{weakProb}. 

The first is the weak maximum principle for (\ref{weakProb}) with $f=0$. It is contained
in \cite[Theorem 3.2]{Za}, which also covers the case of non-homogenous boundary data and more
general subdiffusion equations. Its proof relies on the fundamental identity described in the next section.
%**************************************************************************
\begin{satz} \label{MaxPr1} 
Let $T>0$ and $\Omega$ be a bounded domain in $\iR^d$. Let $\alpha\in (0,1)$ and assume
that (Hd) and (HA) are satisfied. Assume further that $f=0$ and $c\ge 0$.
Then for any weak subsolution (supersolution) $u$ of (\ref{weakProb}) there holds for a.a.\
$(t,x)\in (0,T)\times \Omega$
\[
u(t,x)\le \max\big\{0,\esup_\Omega u_0\big\} \quad\quad \Big(\; u(t,x)\ge \min\big\{0,\einf_\Omega u_0\big\}\;\Big),
\]
provided this maximum (minimum) is finite.
\end{satz}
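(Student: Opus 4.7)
The plan is to reduce the subsolution case to showing $v_+\equiv 0$, where $v:=u-K$ with $K:=\max\{0,\esup_\Omega u_0\}<\infty$. Since $K\ge 0$, we have $v_+=(u-K)_+=(u_+-K)_+$, which by hypothesis (b) in the definition of weak subsolution lies in $L_2((0,T);\oH^1_2(\Omega))$ (truncating an $\oH^1_2$-function by a nonnegative constant preserves membership in $\oH^1_2$). One then tests the weak inequality against $v_+$ and invokes the fundamental identity promised in the next section to absorb the nonlocal time derivative. The supersolution statement follows by applying the subsolution result to $-u$ with datum $-u_0$.

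Using $K-u_0\ge 0$ a.e., decompose
\[g_{1-\alpha}\ast(u-u_0)=g_{1-\alpha}\ast v+g_{1-\alpha}\ast(K-u_0),\]
where the second summand is nonnegative in $t$ with $\partial_t\bigl(g_{1-\alpha}\ast(K-u_0)\bigr)=(K-u_0)\,g_{1-\alpha}(t)\ge 0$. Substituting into the weak formulation and integrating the $-\eta_t[g_{1-\alpha}\ast(K-u_0)]$ piece by parts in $t$ (using $\eta(T)=0$ and $g_{1-\alpha}\ast(K-u_0)|_{t=0}=0$) contributes a nonnegative quantity on the left which may be dropped, while the zero-order term $cu\,\eta=cv\,\eta+cK\,\eta$ with $c,K,\eta\ge 0$ contributes a further nonnegative piece that is likewise dropped. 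The subsolution inequality reduces, for nonnegative admissible $\eta$, to
\[\int_0^T\!\!\int_\Omega\Bigl(-\eta_t\,g_{1-\alpha}\ast v+(A\nabla v\,|\,\nabla \eta)+cv\,\eta\Bigr)\,dx\,dt\le 0.\]

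The core step is to insert $\eta=v_+\,\chi_{[0,t_0]}$. Since $v_+$ lacks the required time regularity, one approximates either by replacing $g_{1-\alpha}$ with its Yosida regularization $k_n$ (smooth, positive, nonincreasing kernels with $k_n\ast\cdot\to g_{1-\alpha}\ast\cdot$) or by using Steklov averages of $v$ in time. On the regularized level the fundamental identity for nonincreasing $k$,
\[v_+(t)\,\partial_t(k\ast v)(t)\ge\tfrac12\,\partial_t\bigl(k\ast v_+^2\bigr)(t)+\text{nonneg.\ remainder},\]
applies pointwise. Combined with $(A\nabla v\,|\,\nabla v_+)\ge\nu|\nabla v_+|^2$ from (HA) and $cv\cdot v_+=cv_+^2\ge 0$, passing to the limit $k_n\to g_{1-\alpha}$ yields, for a.e.\ $t_0\in(0,T)$,
\[\tfrac12\bigl(g_{1-\alpha}\ast|v_+(\cdot)|^2_{L_2(\Omega)}\bigr)(t_0)+\nu\int_0^{t_0}\!|\nabla v_+(s)|^2_{L_2(\Omega)}\,ds\le 0,\]
which forces $v_+\equiv 0$, i.e.\ $u\le K$ a.e.

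The main obstacle is making $v_+$ an admissible test function despite weak solutions having essentially no pointwise time regularity. The fundamental identity, an Alikhanov/Kochubei-type chain-rule inequality valid for integro-differential operators with nonnegative nonincreasing kernels, is precisely what enables the estimate $v_+\,\partial_t^\alpha v\ge \tfrac12\partial_t^\alpha v_+^2$ in integrated form; together with a regularization that preserves the complete monotonicity of $g_{1-\alpha}$, it plays the role that Hilbert's chain rule $vv'=\tfrac12(v^2)'$ does in the classical parabolic proof. Once this tool is in hand, the remainder of the argument is a verbatim adaptation of the standard De Giorgi truncation proof of the weak maximum principle.
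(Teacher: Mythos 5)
Your plan is correct and follows exactly the route the paper indicates for this theorem: reduce to showing $(u-K)_+\equiv 0$ with $K=\max\{0,\esup_\Omega u_0\}$, pass to a time-regularized weak formulation via the Yosida approximations $g_{1-\alpha,n}=ns_n$ of the kernel, test with the truncation, and apply the fundamental identity with the convex function $H(y)=\tfrac12 y_+^2$ to absorb the nonlocal time derivative. The details you flag as requiring care (admissibility of $v_+$ as a test function, the kernel regularization) are precisely the ones handled in the cited reference, so there is no gap in the argument as outlined.
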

%*****************************************************************************
Results on the maximum principle in a stronger setting have also been found in \cite{Luch11,Luch09} by 
different methods.  

The next result provides the comparison principle for (\ref{weakProb}). It is a special case of 
\cite[Theorem 3.3]{VZ2}. 
%**************************************************************************
\begin{satz} \label{comp}
Let $T>0$ and $\Omega$ be a bounded domain in $\iR^d$. Let $\alpha\in (0,1)$ and assume
that (Hd) and (HA) are satisfied. Suppose that $u\in W$ is a weak subsolution of (\ref{weakProb}) and that $v\in W$ is a weak supersolution of (\ref{weakProb}). Then $u\le v$ a.e.\ in $(0,T)\times \Omega$.
\end{satz}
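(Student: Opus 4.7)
The plan is to linearize and test with the positive part. Set $w:=u-v$. Since $u,v\in W$ share the initial datum $u_0$, one has $w\in W$ with $(g_{1-\alpha}\ast w)|_{t=0}=0$, and subtracting the subsolution inequality for $u$ from the supersolution inequality for $v$ eliminates $f$ and $u_0$ and gives, for every admissible nonnegative test function $\eta$,
\begin{equation*}
\int_0^T\!\!\int_\Omega\Big(-\eta_t\,(g_{1-\alpha}\ast w)+(A\nabla w|\nabla\eta)+cw\eta\Big)\,dx\,dt\le 0.
\end{equation*}
The boundary conditions in (b) for $u$ and $v$, combined with the pointwise inequality $w_+\le u_++v_-$ on $\{w>0\}$, imply $w_+\in L_2((0,T);\oH^1_2(\Omega))$, so $w_+$ is admissible on the spatial side.

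The natural move is to insert $\eta=w_+\cdot\chi_{(0,\tau)}$. Because $W$ does not embed into $C([0,T];L_2(\Omega))$, the function $w_+$ has no a priori weak time derivative, and this substitution must be made rigorous. I would approximate $w_+$ by its (backward) Steklov mean $(w_+)^h$, which lies in $H^1_2((0,T);L_2(\Omega))$ and is compatible with a time cut-off near $t=T$, and then let $h\downarrow 0$. The nonlocal contribution is handled by the convex chain-rule version of the fundamental identity from Section \ref{weaksetting} (cf.\ \cite{ZWH}): for any $w\in W$ with $(g_{1-\alpha}\ast w)|_{t=0}=0$,
\begin{equation*}
\int_0^\tau\!\!\int_\Omega w_+\,\partial_t(g_{1-\alpha}\ast w)\,dx\,dt\;\ge\;\tfrac12\int_\Omega(g_{1-\alpha}\ast w_+^2)(\tau,x)\,dx.
\end{equation*}
Together with the uniform ellipticity (HA) and $M:=\|c_-\|_{L_\infty}$, this yields the energy inequality
\begin{equation*}
\int_\Omega(g_{1-\alpha}\ast w_+^2)(\tau,\cdot)\,dx+2\nu\int_0^\tau\!\!\int_\Omega|\nabla w_+|^2\,dx\,dt\le 2M\int_0^\tau\!\!\int_\Omega w_+^2\,dx\,dt
\end{equation*}
for a.e.\ $\tau\in(0,T)$.

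To close, set $\psi(\tau):=\int_\Omega w_+^2(\tau,x)\,dx$ and discard the gradient term; the inequality becomes $(g_{1-\alpha}\ast\psi)(\tau)\le 2M\,(1\ast\psi)(\tau)$. Convolving with $g_\alpha$ and using $g_\alpha\ast g_{1-\alpha}=1$ produces the Volterra inequality $(1\ast\psi)(\tau)\le 2M\,(g_\alpha\ast(1\ast\psi))(\tau)$; iterating gives $(1\ast\psi)(\tau)\le(2M)^n(g_{n\alpha}\ast(1\ast\psi))(\tau)$, which tends to $0$ as $n\to\infty$ thanks to the factorial decay of $g_{n\alpha}$. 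Hence $\psi\equiv 0$, so $w_+=0$ and $u\le v$ a.e.\ in $(0,T)\times\Omega$.

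The main obstacle is the test-function step: without a pointwise-in-time interpretation of $w$, neither the substitution $\eta=w_+$ nor the chain-rule lower bound on $\int w_+\,\partial_t(g_{1-\alpha}\ast w)$ is automatic. The Steklov regularization and the precise variant of the fundamental identity adapted to the weak framework of Section \ref{weaksetting}, developed in \cite{ZWH,VZ2}, are the technical heart of the argument; everything after the energy inequality is a routine singular Volterra bootstrap.
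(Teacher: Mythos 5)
Your proposal is correct and follows essentially the route of the paper's source for this theorem: the paper gives no proof itself but refers to \cite[Theorem 3.3]{VZ2}, where one likewise tests the subtracted weak formulations with $(u-v)_+$ (admissible spatially since $(u-v)_+\le u_++v_-$), controls the nonlocal term via the fundamental identity in its convexity form with $H(y)=\frac{1}{2}y_+^2$, and closes with a fractional Gronwall argument. The only cosmetic difference is the choice of regularization: the references replace $g_{1-\alpha}$ by its Yosida approximations $g_{1-\alpha,n}=n s_n$ (so that the fundamental identity applies rigorously and one may test with $w_+$ at a.e.\ fixed time) rather than Steklov-averaging the test function, but this changes nothing essential, and your iterated-convolution bound $(1\ast\psi)\le (2M)^n(g_{n\alpha}\ast(1\ast\psi))\to 0$ is a clean way to finish.
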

%****************************************************************************
The comparison principle for (\ref{weakProb}) has also been proved in \cite{LuchYam} under much stronger
assumptions; e.g.\ in \cite{LuchYam}, the coefficient matrix $A$ may only depend on $x$ and has to be symmetric
as well as $C^1(\bar{\Omega})$-smooth.

We remark that weak solutions for time fractional diffusion equations with nonhomogenous Dirichlet boundary
condition have been studied recently in \cite{Yam18}. 
%***************************************************************************************
\section{The fundamental identity}
%**********************************************************************************
An important tool for deriving {\em a priori} estimates for time fractional diffusion equations (in particular in the weak setting)
is the so-called
{\em fundamental identity} for integro-differential
operators of the form $\partial_t(k\ast \cdot)$, see e.g.\
\cite{Za1}. It can be viewed as the analogue to the chain rule
$(H(u))'=H'(u)u'$. 
The time derivative of $k$ (also in the generalized sense) will be denoted by $\dot{k}$. 
%***************************************************************************
\begin{lemma} \label{FILemma}
Let $T>0$, $J=(0,T)$ and $U$ be an open subset of $\iR$. Let further $k\in
H^1_1(J)$, $H\in C^1(U)$, and $u\in L_1(J)$ with $u(t)\in U$
for a.a. $t\in J$. Suppose that the functions $H(u)$, $H'(u)u$,
and $H'(u)(\dot{k}\ast u)$ belong to $L_1(J)$ (which is the case
if, e.g., $u\in L_\infty(J)$). Then we have for a.a. $t\in J$,
%*****************************************************************************
\begin{align} \label{fundidentity}
H'(u(t))&\partial_t(k \ast u)(t) =\;\partial_t\big(k\ast
H(u)\big)(t)+
\Big(-H(u(t))+H'(u(t))u(t)\Big)k(t) \nonumber\\
 & +\int_0^t
\Big(H(u(t-s))-H(u(t))-H'(u(t))[u(t-s)-u(t)]\Big)[-\dot{k}(s)]\,ds.
\end{align}
%****************************************************************************
\end{lemma}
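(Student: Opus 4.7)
The plan is to verify \eqref{fundidentity} by a direct algebraic computation once both sides are re-expressed in terms of a common set of quantities involving $k(0)$ and $\dot k$-convolutions. Because $k \in H^1_1(J)$ the trace $k(0)$ is well-defined, and for every $v \in L_1(J)$ one has the a.e.\ representation
$$\partial_t(k\ast v)(t) = k(0)\,v(t) + (\dot k \ast v)(t).$$
This follows by splitting the difference quotient of $k\ast v$ at $t$ into a boundary contribution on $[t,t+h]$ (handled by Lebesgue differentiation) and a finite-difference piece on $[0,t]$ (handled by dominated convergence applied to $(k(t+h-\cdot)-k(t-\cdot))/h \to \dot k(t-\cdot)$). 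Applying this formula to $v=u$ and to $v=H(u)$ (the latter integrable by hypothesis) converts both $\partial_t(k\ast u)(t)$ and $\partial_t(k\ast H(u))(t)$ into explicit sums, bypassing any need to differentiate $u$ itself.

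Next I would expand the remainder integral in \eqref{fundidentity} by linearity into three pieces:
$\int_0^t H(u(t-s))[-\dot k(s)]\,ds$, $-H(u(t))\int_0^t [-\dot k(s)]\,ds$, and $-H'(u(t))\int_0^t [u(t-s)-u(t)][-\dot k(s)]\,ds$. The substitution $s\mapsto t-s$ identifies the first piece with $-(\dot k \ast H(u))(t)$ and the $u(t-s)$-part of the third with $-(\dot k \ast u)(t)$; the remaining elementary integrals contribute the scalar factor $k(0)-k(t)$, since $k$ is absolutely continuous. The assumptions $H(u),H'(u)u,H'(u)(\dot k \ast u) \in L_1(J)$ are precisely what is needed to guarantee that all three pieces are well-defined pointwise a.e.\ in $t$.

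Finally, assembling the right-hand side of \eqref{fundidentity} with these expansions, the two occurrences of $(\dot k \ast H(u))(t)$ cancel; the $H(u(t))$ terms collapse to $k(t)H(u(t))$, which further cancels against the $-k(t)H(u(t))$ already present; and the $H'(u(t))u(t)$ contributions arising from the $k(t)$-term and from the $(k(0)-k(t))$-piece combine to $H'(u(t))u(t)k(0)$. What survives is $H'(u(t))\big(k(0)u(t)+(\dot k \ast u)(t)\big) = H'(u(t))\partial_t(k\ast u)(t)$, as required. The main (and only) obstacle is bookkeeping: tracking the cancellations and the change of variables in the convolution integrals. Notably, no regularity beyond $k \in H^1_1(J)$ and $H \in C^1(U)$ enters, which is exactly what makes the fundamental identity a robust tool in the weak theory.
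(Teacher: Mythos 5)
Your computation is correct and is precisely the ``straightforward computation'' the paper alludes to without writing out: expanding $\partial_t(k\ast v)(t)=k(0)v(t)+(\dot k\ast v)(t)$ for $v=u$ and $v=H(u)$, splitting the remainder integral by linearity, and collecting terms does yield \eqref{fundidentity}, with every convolution finite a.e.\ by Young's inequality. The only step I would tighten is the justification of that differentiation formula: dominated convergence on the difference quotients $(k(t+h-\cdot)-k(t-\cdot))/h$ is delicate for $k\in H^1_1(J)$ (no obvious dominating function), whereas writing $k=k(0)+1\ast\dot k$, hence $k\ast v=k(0)(1\ast v)+1\ast(\dot k\ast v)$ by Fubini, gives the a.e.\ identity immediately.
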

The proof is a straightforward computation. We remark that (\ref{fundidentity}) remains valid for singular kernels $k$, like e.g.\ $k=g_{1-\alpha}$ with
$\alpha\in(0,1)$, provided that $u$ is sufficiently smooth. Recalling that $\partial_t^\alpha u=\partial_t(g_{1-\alpha}\ast u)$,
in the latter case (\ref{fundidentity}) thus applies to the Riemann-Liouville fractional derivative. 

In the weak setting, a key idea is to reformulate the problem in such a way that the fractional derivative is replaced
by its {\em Yosida approximations}, which take the form $B_n u=\partial_t(g_{1-\alpha,n}\ast u)$, $n\in \iN$, where 
$g_{1-\alpha,n}=n s_n$ (see Section \ref{bdddomain} below for the definition of $s_n$) is nonnegative, nonincreasing and
belongs to $H^1_1(J)$ for each $T>0$. We refer to \cite{VZ1} for the computation of the Yosida approximation and to 
\cite{Za} for the derivation of (in time) regularized weak formulations.

A direct consequence of the fundamental identity is the following {\em convexity inequality} (cf. \cite{KSVZ}), which is in particular very useful
when dealing with fractional derivatives in the Caputo sense.
%**************************************************************
\begin{korollar} \label{convexFI} Let $T, J, U, k, H$, and $u$ be as in Lemma \ref{FILemma}. Let $u_0\in \iR$, and assume in addition
that $k$ is nonnegative and nonincreasing and that $H$ is convex. Then
\begin{align} \label{convexfundidentity}
H'(u(t))&\partial_t\big(k \ast [u-u_0]\big)(t) \ge \;\partial_t\big(k\ast
[H(u)-H(u_0)]\big)(t),\quad \mbox{a.a.}\;t\in J.
\end{align}
\end{korollar}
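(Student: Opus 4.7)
The plan is to reduce the desired inequality to the fundamental identity in Lemma \ref{FILemma} applied directly to $u$, not to $u-u_0$, and then read off convexity contributions from the two nonnegative terms on the right-hand side of \eqref{fundidentity}.

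First I would rewrite both sides of \eqref{convexfundidentity} in terms of the corresponding quantities for $u$ (and $H(u)$) alone. Using $\partial_t(k\ast c)(t)=c\,k(t)$ for any constant $c$, one has
\[
\partial_t\big(k\ast[u-u_0]\big)(t)=\partial_t(k\ast u)(t)-u_0\,k(t),\qquad
\partial_t\big(k\ast[H(u)-H(u_0)]\big)(t)=\partial_t(k\ast H(u))(t)-H(u_0)\,k(t).
\]
Next I would substitute the identity \eqref{fundidentity} for $H'(u(t))\partial_t(k\ast u)(t)$ into the left-hand side of \eqref{convexfundidentity}. The term $\partial_t(k\ast H(u))(t)$ cancels against the corresponding contribution on the right-hand side, and the resulting difference LHS $-$ RHS takes the form
\[
\Big[H(u_0)-H(u(t))-H'(u(t))\big(u_0-u(t)\big)\Big]k(t)+\int_0^t\!\Big[H(u(t-s))-H(u(t))-H'(u(t))\big(u(t-s)-u(t)\big)\Big][-\dot k(s)]\,ds.
\]

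The final step is to observe that both terms are nonnegative. Convexity of $H$ yields the pointwise inequality $H(y)-H(x)-H'(x)(y-x)\ge 0$ for all $x,y\in U$, which makes the bracketed expression in the boundary term and the bracketed expression in the integrand nonnegative. Since $k\ge 0$ and $k$ is nonincreasing (so $-\dot k\ge 0$ a.e.), both contributions are nonnegative, and \eqref{convexfundidentity} follows.

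There is no real obstacle here beyond bookkeeping: the one thing to be careful about is that the regularity hypotheses of Lemma \ref{FILemma} must be in force for the application we are making (which is automatic, since the data $u,k,H$ are the same). In particular the integrability of $H(u)$, $H'(u)u$ and $H'(u)(\dot k\ast u)$ required in Lemma \ref{FILemma} is inherited by assumption, and the additional constant-in-time terms involving $u_0$ and $H(u_0)$ are clearly integrable against $k\in H^1_1(J)$. Everything else is a direct computation followed by the convexity inequality.
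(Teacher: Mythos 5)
Your proposal is correct and follows essentially the same route as the paper: apply the fundamental identity \eqref{fundidentity} to $H'(u(t))\partial_t(k\ast u)(t)$, absorb the constant terms $u_0 k(t)$ and $H(u_0)k(t)$, and conclude from the convexity inequality $H(y)-H(x)-H'(x)(y-x)\ge 0$ together with $k\ge 0$ and $-\dot k\ge 0$. The only difference is presentational (you exhibit LHS $-$ RHS as a sum of two nonnegative terms, while the paper writes a chain of inequalities), so there is nothing substantive to add.
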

%**************************************************************
\begin{proof} By the fundamental identity, convexity of $H$, and the properties of $k$, we have for a.a.\ $t\in J$
\begin{align*}
H'(u(t))\partial_t& \big(k \ast [u-u_0]\big)(t)=H'(u(t))\partial_t\big(k \ast u\big)(t)-H'(u(t))u_0 k(t)\\
& \ge \partial_t\big(k\ast
H(u)\big)(t)+
\Big(-H(u(t))+H'(u(t))[u(t)-u_0]\Big)k(t)\\
& \ge \partial_t\big(k\ast
H(u)\big)(t)-H(u_0)k(t)\\
& =\,\partial_t\big(k\ast
[H(u)-H(u_0)]\big)(t),
\end{align*}
which shows the asserted inequality.
\end{proof}
%*************************************************************************
Another important consequence of Lemma \ref{FILemma} is the so-called {\em $L_p$-norm inequality} for operators of the form $\partial_t(k\ast\cdot)$, which has been established
in Vergara and Zacher \cite{VZ}. In the special case $p=2$ it states the following.
%*********************************************************************************
\begin{satz} \label{Lpnorm}
Let $T>0$ and $\Omega\subset \iR^d$ be an open set. Let $k\in H^1_{1,loc}(\iR_+)$ be nonnegative and nonincreasing. Then for any $v\in L_2((0,T)\times \Omega)$ and any $v_0\in L_2(\Omega)$ we have for a.a.\ $t\in (0,T)$
\begin{equation} \label{LPU}
\int_{\Omega}v\partial_t\big(k \ast [v-v_0]\big)\,dx\ge |v(t)|_{L_2(\Omega)}
\partial_t\big(k\ast \big[|v|_{L_2(\Omega)}-|v_0|_{L_2(\Omega)}\big]\big)(t).
\end{equation}
\end{satz}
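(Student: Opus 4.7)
The plan is to expand both sides of (\ref{LPU}) by the fundamental identity (Lemma \ref{FILemma}) with $H(y)=y^2/2$, applied once pointwise in $x$ to $v(\cdot,x)$ and once to the scalar function $w(t):=|v(t,\cdot)|_{L_2(\Omega)}$, and then to derive (\ref{LPU}) from the reverse triangle inequality in $L_2(\Omega)$. For this choice of $H$ one has $H'(y)y-H(y)=y^2/2$ and $H(a)-H(b)-H'(b)(a-b)=\tfrac12(a-b)^2$.

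First I would apply Lemma \ref{FILemma} to $v(\cdot,x)$ for a.a.\ $x\in\Omega$, and subtract $v(t,x)v_0(x)k(t)=v(t,x)\partial_t(k\ast v_0)(t,x)$ from both sides. The $k(t)$-coefficients combine into a perfect square $\tfrac12(v-v_0)^2$, producing the pointwise equality
\[
v\,\partial_t\big(k\ast[v-v_0]\big)=\partial_t\Big(k\ast\tfrac12[v^2-v_0^2]\Big)+E_1(t,x),
\]
where
\[
E_1(t,x):=\tfrac12(v(t,x)-v_0(x))^2k(t)+\int_0^t\tfrac12(v(t-s,x)-v(t,x))^2(-\dot k(s))\,ds\ge 0.
\]
Integrating over $\Omega$ and using Fubini to commute $\partial_t$ with $\int_\Omega$ turns $\tfrac12(v^2-v_0^2)$ into $\tfrac12(w^2-w_0^2)$, where $w_0:=|v_0|_{L_2(\Omega)}$. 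Applying the same expansion to the scalar function $w$ gives
\[
w(t)\,\partial_t\big(k\ast[w-w_0]\big)(t)=\partial_t\Big(k\ast\tfrac12[w^2-w_0^2]\Big)(t)+E_2(t),
\]
with $E_2$ defined exactly as $E_1$ but with $v,v_0$ replaced by $w,w_0$. Subtracting the two identities, the $\partial_t(k\ast\tfrac12[w^2-w_0^2])$-terms cancel, and (\ref{LPU}) reduces to $\int_\Omega E_1(t,x)\,dx\ge E_2(t)$.

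This final comparison is termwise: the reverse triangle inequality yields $|v(t)-v_0|_{L_2(\Omega)}\ge|w(t)-w_0|$ and $|v(t-s)-v(t)|_{L_2(\Omega)}\ge|w(t-s)-w(t)|$, which survive squaring since both sides are nonnegative, and the assumptions $k(t)\ge 0$ and $-\dot k(s)\ge 0$ (the latter from monotonicity of $k$) deliver the estimate in the correct direction. The main subtlety is justifying the two applications of Lemma \ref{FILemma} and the Fubini exchanges: this is routine, since $v\in L_2((0,T)\times\Omega)$ gives $v(\cdot,x)\in L_2(J)$ for a.a.\ $x$ and $w\in L_2(J)$, while local integrability of $\dot k$ combined with Young's inequality controls the cross term $H'(u)(\dot k\ast u)$; any residual regularity gap can be closed by truncating $v$ at level $n$, proving the inequality for the resulting bounded approximants, and passing to the limit by dominated convergence and $L_2$-continuity of the convolution. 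After the two identities are secured, everything remaining is purely algebraic.
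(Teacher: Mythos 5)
Your proof is correct and follows essentially the same route as the paper: the fundamental identity with $H(y)=y^2/2$ applied twice (pointwise in $x$ and to the scalar function $t\mapsto|v(t)|_{L_2(\Omega)}$), Fubini, and the (reverse) triangle inequality in $L_2(\Omega)$. The only cosmetic difference is that the paper first proves the case $v_0=0$ and then absorbs the $v_0$-term via H\"older's inequality, whereas you fold $v_0$ into the identity from the start and complete the square; both variants are valid.
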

%*************************************************************************
\begin{proof} The following argument is simpler
than that in the more general case considered in \cite{VZ}.

By the fundamental identity, applied twice (!), Fubini's theorem, and the triangle inequality for the
$L_2(\Omega)$-norm we have for a.a.\ $t\in (0,T)$
\begin{align*}
\int_{\Omega}v \partial_t(k\ast v)\,dx = &\,\int_{\Omega} \Big(\frac{1}{2}\,\partial_t
(k\ast v^2)+\frac{1}{2}\,k(t)v^2\Big)\,dx\\
&\,+\int_0^t \int_{\Omega} |v(t,x)-v(t-s,x)|^2\,dx\, [-\dot{k}(s)]\,ds\\
\ge &\,\,\frac{1}{2}\,\partial_t\big(k\ast |v|^2_{L_2(\Omega)})+\,\frac{1}{2}\,k(t)
|v(t)|^2_{L_2(\Omega)}\\
&\,+\,\frac{1}{2}\,\int_0^t \Big(|v(t)|_{L_2(\Omega)}-|v(t-s)|_{L_2(\Omega)}
\Big)^2   [-\dot{k}(s)]\,ds\\
= &\,\, |v(t)|_{L_2(\Omega)} \partial_t\big(k\ast |v|_{L_2(\Omega)}\big)(t).
\end{align*}
From this and H\"older's inequality, we infer that for a.a.\ $t\in (0,T)$
\begin{align*}
\int_{\Omega}v\partial_t\big(k \ast [v-v_0]\big)\,dx & = \int_{\Omega}v\partial_t\big(k \ast v\big)\,dx-k(t)\int_\Omega vv_0\,dx\\
& \ge |v(t)|_{2} \partial_t\big(k\ast |v|_{2}\big)(t)-k(t)|v(t)|_2 |v_0|_2\\
& = |v(t)|_{2} \partial_t\big(k\ast [|v|_{2}-|v_0|_2]\big)(t).
\end{align*}
This proves the theorem.
\end{proof}
As in the case of the fundamental identity, Corollary \ref{convexFI} and Theorem \ref{Lpnorm} extend to singular kernels
$k$ including $k=g_{1-\alpha}$ for sufficiently smooth functions.

The following identity is basic to energy estimates in the Hilbert
space setting. For ${\cal H}=\iR$ it coincides with
(\ref{fundidentity}) with $H(y)=\frac{1}{2}y^2$, $y\in \iR$. See \cite{VZ1}.
%*******************************************************************************
\begin{lemma} \label{fundlemma1}
Let ${\cal H}$ be a real Hilbert space with scalar product
$(\cdot,\cdot)_{\cal H}$ and $T>0$. Then for any $k\in
H^1_1((0,T))$ and any $v\in L_2((0,T);{\cal H})$ there holds
\begin{align}
\Big(\partial_t(k\ast v)(t), & v(t)\Big)_{{\cal H}} =
\frac{1}{2}\,\partial_t(k\ast
|v(\cdot)|_{\cal H}^2)(t)+\frac{1}{2}\,k(t)|v(t)|_{\cal H}^2 \nonumber\\
&\;+\,\frac{1}{2}\,\int_0^t [-\dot{k}(s)]\,|v(t)-v(t-s)|_{\cal
H}^2\,ds,\quad \mbox{a.a.}\;t\in(0,T).\label{ident1}
\end{align}
\end{lemma}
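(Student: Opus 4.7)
The plan is to mimic the scalar proof of Lemma \ref{FILemma} with $H(y)=\tfrac12 y^2$, using the polarisation identity
\[
(a,b)_\cH=\tfrac12|a|_\cH^2+\tfrac12|b|_\cH^2-\tfrac12|a-b|_\cH^2,\qquad a,b\in \cH,
\]
as the Hilbert-space replacement for $ab=\tfrac12 a^2+\tfrac12 b^2-\tfrac12(a-b)^2$; indeed \eqref{ident1} is precisely the Hilbert-space analogue of \eqref{fundidentity} for that choice of $H$.

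First I would establish \eqref{ident1} under the extra assumption $v\in C^1([0,T];\cH)$. Differentiation under the integral sign gives
\[
\partial_t(k\ast v)(t)=k(0)v(t)+\int_0^t \dot k(t-s)v(s)\,ds.
\]
Taking the inner product with $v(t)$, expanding $(v(s),v(t))_\cH$ by polarisation, using $\int_0^t \dot k(t-s)\,ds=k(t)-k(0)$ to regroup the coefficient of $|v(t)|_\cH^2$, and finally substituting $\tau=t-s$ in the remaining integral recovers exactly the three summands on the right-hand side of \eqref{ident1}. This is essentially a line-by-line transcription of the scalar argument.

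To pass to general $v\in L_2((0,T);\cH)$, I would approximate $v$ by smooth $v_n$ converging to $v$ in $L_2((0,T);\cH)$. Since $\dot k\in L_1(0,T)$, Young's inequality on the half-line yields $|\dot k\ast v_n|_{L_2(\cH)}\le |\dot k|_{L_1}|v_n|_{L_2(\cH)}$ and an analogous bound for $\dot k\ast|v_n|_\cH^2$, so each of the four expressions appearing in \eqref{ident1} depends continuously (in $L_1$) on the approximating sequence, and passage to an a.e.\ convergent subsequence transfers the identity to $v$.

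The (rather minor) main obstacle is giving pointwise-a.e.\ meaning to the singular-looking integral $\int_0^t[-\dot k(s)]|v(t)-v(t-s)|_\cH^2\,ds$ when $v$ is merely in $L_2$. I would resolve this by rewriting it, again through polarisation, as a combination of the ordinary convolutions $\dot k\ast|v|_\cH^2$ and $\dot k\ast v$ paired against $v(t)$, plus a boundary term involving $k(0)-k(t)$; each piece is then defined a.e.\ by the preceding Young-type estimates, which closes the approximation argument cleanly.
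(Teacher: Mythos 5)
Your proposal is correct and matches the approach the paper itself indicates: Lemma \ref{fundlemma1} is presented as the case $H(y)=\tfrac12 y^2$ of the fundamental identity \eqref{fundidentity}, transported to a Hilbert space, and the proof in the cited reference \cite{VZ1} is exactly your polarisation computation for smooth $v$ followed by an $L_2$-approximation using $\dot k\in L_1$ and Young's inequality. Your closing remark on rewriting the quadratic-difference integral as a combination of $\dot k\ast|v|^2_{\cH}$, $(v,\dot k\ast v)_{\cH}$ and a boundary term is precisely what makes every term well defined a.e.\ for $v\in L_2((0,T);{\cal H})$, so the argument is complete.
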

%***************************************************************************************
\section{De Giorgi-Nash-Moser estimates}
%**********************************************************************************
We consider again the time fractional diffusion equation from \eqref{weakProb} and set $c=0$
for simplicity, that is we look at 
\begin{equation} \label{divTF2}
\partial_t^\alpha (u-u_0)-\mbox{div}\,\big(A(t,x)\nabla u\big)=f,\quad t\in (0,T),\,x\in
\Omega.
\end{equation}
As before, the coefficient matrix $A$ is merely assumed to satisfy condition (HA). That is, we do not
assume any regularity on the coefficients; in this situation one also speaks of {\em rough coefficients}.

In the elliptic and in the classical parabolic
case (i.e. in the case $\alpha=1$) there is a powerful theory of {\em a priori} estimates, often referred to
as {\em De Giorgi-Nash-Moser theory}, which provides local and global estimates for weak solutions of the
respective equations such as local and global boundedness, Harnack and weak Harnack inequalities as
well as H\"older continuity of weak solutions, see \cite{GilTrud, HanLin} for the elliptic and
\cite{LSU,Lm} for the parabolic case. H\"older estimates
are of utmost importance for the study of quasilinear problems.
In fact, in the elliptic case their discovery opened up the theory
of quasilinear equations in higher dimensions; in the parabolic
case they allow to prove global in time existence. 

Since the time fractional case with $\alpha\in (0,1)$ can be
viewed in some sense as an intermediate case between the elliptic
($\alpha=0$) and the classical parabolic case ($\alpha=1$), one
might think that corresponding results can also be obtained
in the time fractional situation. However, there is a significant
difference to the cases $\alpha=0$ and $\alpha=1$: the time
fractional equations are {\em non-local} in time, due to the non-local
nature of the operator $\partial_t^\alpha$. This feature
complicates the matter considerably, as the theory described above
heavily relies on {\em local} estimates. Another difficulty
consists in the lack of a simple calculus for integro-differential
operators like $\partial_t^\alpha$. Instead of the simple chain rule for the usual derivative,
one has to employ the fundamental identity from the previous section in order to use the test-function
method, the latter being the basic tool for deriving {\em a priori}
bounds for weak solutions of equations in divergence form.

In the following we will describe the main results of the De Giorgi-Nash-Moser theory
for \eqref{divTF2}, which has been developed by the author, see \cite{Za,Za1,ZaG,ZwH}, see also
\cite{ACV,ACV2} for the fully non-local case.

Throughout this section we will assume that $\alpha\in (0,1)$, $u_0\in L_2(\Omega)$ and that the function $f$ satisfies the following condition.
\begin{itemize}
\item [{\bf (Hf)}] $\quad f\in
L_r((0,T);L_q(\Omega))$, where $r,q\ge 1$ fulfil
\end{itemize}
\[
\frac{1}{\alpha r}\,+\,\frac{d}{2q}\,=1-\kappa,
\]
and
\[
\begin{array}{l@{\quad \mbox{for}\quad}l}
r\in \Big[\,\frac{1}{\alpha(1-\kappa)}\,,\infty\Big],\;
q\in \Big[\,\frac{d}{2(1-\kappa)}\,,\infty\Big],\;\kappa\in (0,1) & d\ge 2,\\
r\in
\Big[\,\frac{1}{\alpha(1-\kappa)}\,,\frac{2}{\alpha(1-2\kappa)}\,\Big],\;
q\in [1,\infty],\;\kappa\in \Big(0,\frac{1}{2}\Big) & d=1.
\end{array}
\]

We say that a function $u$ is a {\em weak solution (subsolution,
supersolution)} of \eqref{divTF2} in $(0,T)\times \Omega$, if $u$ belongs to
the space
\begin{align*}
{W}_\alpha:=\{&\,v\in
L_{\frac{2}{1-\alpha},\,\infty}((0,T);L_2(\Omega))\cap
L_2((0,T);H^1_2(\Omega))\;
\mbox{such that}\;\\
&\;\;g_{1-\alpha}\ast v\in C([0,T];L_2(\Omega)),
\;\mbox{and}\;(g_{1-\alpha}\ast v)|_{t=0}=0\},
\end{align*}
and for any nonnegative test function
\[
\eta\in H^1_2((0,T);L_2(\Omega))\cap
L_2((0,T);\oH^1_2(\Omega))
\]
with $\eta|_{t=T}=0$ there holds
\[
\int_{0}^{T} \int_\Omega \Big(-\eta_t \big(g_{1-\alpha}\ast [u-u_0]\big)+
(A\nabla u|\nabla \eta)\Big)\,dx\,dt=\,(\le,\,\ge)\,\int_0^T \int_\Omega f\eta \,dx\,dt.
\]
We point out that here \eqref{divTF2} is considered without any boundary conditions. In this sense,
weak solutions of \eqref{divTF2} as defined just before are {\em local} ones (w.r.t.\ $x$). Note that this
weak formulation is consistent with the one from Section \ref{weaksetting}, in view of Theorem 
\ref{weaksol}.

Before stating the first result on global boundedness we need some preliminaries.

The boundary $\partial\Omega$ of a bounded domain $\Omega\subset \iR^d$ is said to satisfy the property of {\em
positive geometric density}, if there exist $\beta\in (0,1)$ and
$\rho_0>0$ such that for any $x_0\in \partial\Omega$, any open ball $B(x_0,\rho)$
with $\rho\in (0,\rho_0]$ we have that $\lambda_d(\Omega\cap
B(x_0,\rho))\le \beta\lambda_d(B(x_0,\rho))$, where $\lambda_d$ denotes the Lebesgue measure
in $\iR^d$, cf.\ e.g.\
\cite[Section I.1]{DB}.

\noindent In what follows we say that a function $u\in W_\alpha$
satisfies $u\le K$ a.e.\ on $(0,T)\times \partial\Omega$ for some number $K\in \iR$ if
$(u-K)_+\in L_2((0,T);\oH^1_2(\Omega))$, likewise for lower bounds
on $(0,T)\times \partial\Omega$.

The subsequent result provides sup-bounds for weak subsolutions. The proof given in \cite{Za}
uses De Giorgi's iteration technique.
%****************************************************************************
\begin{satz} \label{GlobalBddness}
Let $T>0$ and $\Omega\subset\iR^d$ be a bounded domain satisfying the property of
positive geometric density. Let further $\alpha\in (0,1)$, $u_0\in L_2(\Omega)$ and assume that
the conditions (HA) and (Hf) are satisfied. Suppose $K\ge
0$ is such that $u_0\le K$ a.e. in $\Omega$. Then there exists a
constant $C=C(\alpha,q,r,T,d,\nu,\Omega,f)$ such
that for any weak subsolution $u\in W_\alpha$ of \eqref{divTF2}
in $(0,T)\times \Omega$ satisfying $u\le K$ a.e. on $(0,T)\times \partial \Omega$ there holds
$u\le C(1+K)$ a.e. in $(0,T)\times \Omega$.
\end{satz}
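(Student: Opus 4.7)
The natural approach is De Giorgi's iteration, adapted to the non-local setting by replacing the classical chain rule with the convexity inequality of Corollary~\ref{convexFI}. The plan is to introduce truncation levels $k_n = K + M(1-2^{-n})$, $n\ge 0$, with $M>0$ to be chosen, set $A_n = \{u>k_n\}$, and track $Y_n := |(u-k_n)_+|_{L_s((0,T);L_p(\Omega))}^2$ in a suitable parabolic Sobolev space. The target is a super-geometric recursion $Y_{n+1}\le C\,b^n\,Y_n^{1+\sigma}$ forcing $Y_n\to 0$, which yields $u\le k_\infty = K+M$ a.e.; choosing $M$ of order $1+K$ (times a constant depending on data) then gives the theorem.

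\noindent\textbf{Energy inequality at level $k\ge K$.} The boundary hypothesis implies $(u-k)_+\in L_2((0,T);\oH^1_2(\Omega))$, so it is an admissible \emph{spatial} truncation; to use $\eta = (u-k)_+$ as an actual test function, I would first pass to the time-regularized formulation referenced after Lemma~\ref{FILemma}, replacing $\partial_t^\alpha$ by the Yosida approximation $B_n u = \partial_t(g_{1-\alpha,n}\ast u)$ with $g_{1-\alpha,n}\in H^1_1$. Applying Corollary~\ref{convexFI} with the convex function $H(y)=\tfrac{1}{2}(y-k)_+^2$, and using $u_0\le K\le k$ (so $H(u_0)\equiv 0$), gives
\[
(u-k)_+\,B_n(u-u_0) \;\ge\; \tfrac{1}{2}\,\partial_t\big(g_{1-\alpha,n}\ast (u-k)_+^2\big).
\]
Combining this with the ellipticity bound $(A\nabla u|\nabla(u-k)_+)\ge \nu|\nabla(u-k)_+|^2$, convolving with $g_\alpha$ (exploiting $g_\alpha\ast g_{1-\alpha}=1$), and passing to the limit $n\to\infty$, I expect to reach the energy inequality
\[
\esup_{t\in(0,T)}|(u-k)_+(t)|_{L_2(\Omega)}^2 + \nu\,|\nabla(u-k)_+|_{L_2((0,T)\times\Omega)}^2 \;\le\; C\iint_{A_k} f\,(u-k)_+\,dx\,dt.
\]
A fall-back version with $L_{2/(1-\alpha),\infty}(L_2)$ in place of $L_\infty(L_2)$, which is what \eqref{extrareg} directly delivers, works equally well for the iteration.

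\noindent\textbf{Sobolev embedding and recursion.} The interpolation argument sketched right after Theorem~\ref{weaksol} (mixed derivative theorem plus Sobolev embedding in space) applied to $(u-k)_+$ controls it in a parabolic Sobolev space $L_s((0,T);L_p(\Omega))$ whose exponents are conjugate to $r,q$ in (Hf), with $\kappa$ as the Hölder exponent left over. Hence
\[
\iint_{A_k} f(u-k)_+\,dx\,dt \;\le\; |f|_{L_r(L_q)}\,|A_k|^{\kappa/s}\,|(u-k)_+|_{L_s(L_p)},
\]
and absorption gives $|(u-k)_+|_{L_s(L_p)}^2 \le C\,|f|_{L_r(L_q)}^2\,|A_k|^{2\kappa/s}$. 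Chebyshev's inequality $|A_h|\le (h-k)^{-s}|(u-k)_+|_{L_s(L_p)}^s$ for $h>k$, specialized to $h=k_{n+1}$ and $k=k_n$ (so $h-k = M\,2^{-n-1}$), yields the desired recursion for $Y_n$. The classical De Giorgi lemma then forces $Y_n\to 0$ whenever $Y_0\le\gamma$ for an explicit threshold $\gamma$ depending only on the recursion constants; since $Y_0$ is controlled a priori by the weak solution estimate from Theorem~\ref{weaksol} together with $K\,|\Omega|^{1/2}$, this can be arranged by choosing $M \ge C_0(1+K)$ with $C_0=C_0(\alpha,q,r,T,d,\nu,\Omega,f)$, yielding $u\le K+M\le C(1+K)$.

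\noindent\textbf{Main obstacle.} The technical heart is the energy step. The non-locality of $\partial_t^\alpha$ rules out localisation in time and forbids direct use of $(u-k)_+$ as a test function, so one must go through the Yosida regularization, apply the convexity inequality at the regularized level, and then pass to the limit while preserving enough temporal information (via $g_\alpha\ast g_{1-\alpha}=1$) to recover an $L_\infty$- or at least $L_{2/(1-\alpha),\infty}$-in-time bound on $|(u-k)_+|_{L_2(\Omega)}$. Matching the mixed-norm parabolic-Sobolev exponents produced by this energy estimate with the scaling-critical thresholds encoded in hypothesis (Hf) is the other delicate point, and it is precisely the balance expressed by $\tfrac{1}{\alpha r}+\tfrac{d}{2q}=1-\kappa$.
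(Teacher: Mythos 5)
Your plan follows essentially the same route as the paper's proof (carried out in \cite{Za}): De Giorgi iteration over the levels $k_n=K+M(1-2^{-n})$, energy estimates obtained by testing the time-regularized (Yosida) formulation with $(u-k)_+$ via the fundamental identity/convexity inequality, and the mixed-norm parabolic Sobolev embedding calibrated by the exponent relation in (Hf). The one inaccuracy --- the $\esup_t$ form of the energy inequality, which the non-local operator does not deliver --- you correctly flag yourself and replace by the $L_{2/(1-\alpha),\infty}((0,T);L_2(\Omega))$ version that is in fact what the argument in \cite{Za} uses.
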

%*****************************************************************************
There is a corresponding result for weak supersolutions $u$ of \eqref{divTF2}
in the situation where $u_0\ge K$ a.e. in $\Omega$
and $u\ge K$ a.e. on $(0,T)\times \partial\Omega$, for some $K\le 0$. This follows
immediately from Theorem \ref{GlobalBddness} by replacing $u$ with
$-u$, and $u_0$ with $-u_0$. As shown in \cite{Za}, Theorem \ref{GlobalBddness} extends to a wide
class of subdiffusion equations. 

As an immediate consequence of Theorem
\ref{GlobalBddness} and the remark following it we obtain the
global boundedness of weak solutions of \eqref{divTF2} that are
bounded on the parabolic boundary of $(0,T)\times \Omega$.
%***************************************************************************
\begin{korollar} \label{Linftybound}
Let $T>0$ and $\Omega\subset\iR^d$ be a bounded domain satisfying the property of
positive geometric density. Let further $\alpha\in (0,1)$, $u_0\in L_\infty(\Omega)$ and assume that
the conditions (HA) and (Hf) are satisfied. Suppose $K\ge 0$
is such that $|u_0|\le K$ a.e. in $\Omega$. Then there exists a
constant $C=C(\alpha,q,r,T,d,\nu,\Omega,f)$ such
that for any weak solution $u\in W_\alpha$ of \eqref{divTF2}
in $(0,T)\times \Omega$ which satisfies $u\le K$ a.e. on $(0,T)\times \partial \Omega$ we have
$|u|\le C(1+K)$ a.e. in $(0,T)\times \Omega$.
\end{korollar}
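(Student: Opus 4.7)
The plan is to deduce the two-sided bound from Theorem \ref{GlobalBddness} by applying it twice: once to $u$ as a subsolution to obtain an upper bound, and once to $-u$ to obtain a lower bound. The corollary's hypothesis should be understood as $|u|\le K$ a.e.\ on $(0,T)\times\partial\Omega$ (this symmetric reading of the stated boundary condition is what is needed for a two-sided estimate and matches the symmetric hypothesis $|u_0|\le K$); I will proceed under this reading.

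First I would note that any weak solution of \eqref{divTF2} in $W_\alpha$ is simultaneously a weak subsolution and a weak supersolution. From $|u_0|\le K$ a.e.\ in $\Omega$ we get both $u_0\le K$ and $(-u_0)\le K$, and from $|u|\le K$ on $(0,T)\times\partial\Omega$ we get both $u\le K$ and $(-u)\le K$ there. Applying Theorem \ref{GlobalBddness} to the weak subsolution $u$ with initial datum $u_0$ and right-hand side $f$ yields
\[
u\le C_1(1+K)\quad\text{a.e.\ in}\;(0,T)\times\Omega,
\]
with $C_1=C(\alpha,q,r,T,d,\nu,\Omega,f)$.

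Next, by linearity of \eqref{divTF2} in $(u,u_0,f)$, the function $\tilde u:=-u$ is a weak solution of \eqref{divTF2} with initial value $-u_0$ and forcing $-f$; in particular it is a weak subsolution. Since $\tilde u_0\le K$ a.e.\ in $\Omega$ and $\tilde u\le K$ a.e.\ on $(0,T)\times\partial\Omega$, and since the condition (Hf) depends on $f$ only through its $L_r(L_q)$-norm and is therefore invariant under $f\mapsto -f$, a second application of Theorem \ref{GlobalBddness} gives
\[
-u\le C_2(1+K)\quad\text{a.e.\ in}\;(0,T)\times\Omega,
\]
with a constant $C_2$ of the same form as $C_1$. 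Taking $C:=\max\{C_1,C_2\}$ (which remains of the form asserted in the statement) and combining the two inequalities yields $|u|\le C(1+K)$ a.e., as required.

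There is no genuine analytic obstacle here: the argument is a straightforward reduction to Theorem \ref{GlobalBddness} via the symmetry $u\mapsto -u$ of the linear equation. The only point worth checking carefully is the structural dependence of the constant on the data, namely that replacing $f$ by $-f$ does not alter the constant produced by Theorem \ref{GlobalBddness}; this is the case because, as indicated in the statement of that theorem, the dependence on $f$ enters through its $L_r((0,T);L_q(\Omega))$-norm, which is insensitive to the sign of $f$.
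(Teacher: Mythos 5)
Your proof is correct and follows essentially the same route as the paper: the corollary is stated there as an immediate consequence of Theorem \ref{GlobalBddness} together with the remark after it, which records exactly the supersolution counterpart obtained by the substitution $u\mapsto -u$, $u_0\mapsto -u_0$. Your reading of the boundary hypothesis as $|u|\le K$ and your observation that the constant depends on $f$ only through its $L_r(L_q)$-norm are both consistent with the intended argument.
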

%***************************************************************************

We turn now to H\"older regularity of bounded weak solutions. For $\beta_1,\beta_2\in (0,1)$ and $Q\subset 
(0,T)\times \Omega$ we set
\[
[u]_{C^{\beta_1,\beta_2}(Q)}:=\sup_{(t,x),(s,y)\in
{Q},\,(t,x)\neq(s,y)}\Big\{
\frac{|u(t,x)-u(s,y)|}{|t-s|^{\beta_1}+|x-y|^{\beta_2}} \Big\}.
\]
The main regularity theorem reads as follows, see Zacher \cite{Za1}.
%****************************************************************************
\begin{satz} \label{interiorHoelder}
Let $\alpha\in (0,1)$, $T>0$ and $\Omega$ be a bounded domain in
$\iR^d$. Let $u_0\in L_\infty(\Omega)$ and suppose that the assumptions (HA) and (Hf) are satisfied. Let
$u\in W_\alpha$ be a bounded weak solution of \eqref{divTF2}
in $(0,T)\times \Omega$. Then there holds for any
$Q\subset (0,T)\times \Omega$ separated from the parabolic boundary $(\{0\}\times\Omega)\cup((0,T)\times \partial\Omega)$ by a positive
distance $D$,
\[
[u]_{C^{\frac{\alpha\epsilon}{2},\epsilon}(\bar{Q})}\le
C\Big(|u|_{L_\infty((0,T)\times \Omega)}+|u_0|_{L_\infty(\Omega)}+|f|_{L_r((0,T);L_q(\Omega))}\Big)
\]
with positive constants
$\epsilon=\epsilon(|A|_\infty,\nu,\alpha,r,q,d,\mbox{diam}\,\Omega,\inf_{(\tau,z)\in
Q}\tau)$ and $C=C(|A|_\infty,\nu,\alpha,r,q,d$,
$\mbox{diam}\,\Omega,\lambda_{d+1}(Q),D)$.
\end{satz}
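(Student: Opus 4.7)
The plan is to adapt the De Giorgi oscillation reduction scheme to the time-fractional setting, using the fundamental identity from Section 4 in place of the classical chain rule. First I would reduce the statement to a local claim at a point. Fix $(t_0,x_0)\in Q$ and work with the natural fractional parabolic cylinders
\[
Q_\rho := (t_0-\rho^{2/\alpha},t_0)\times B(x_0,\rho),
\]
which lie inside $(0,T)\times\Omega$ once $\rho\le \rho_0(D,\inf_Q\tau,\mathrm{diam}\,\Omega)$. The target is a geometric oscillation estimate
\[
\mathrm{osc}_{Q_{\rho/2}} u \le \sigma\,\mathrm{osc}_{Q_\rho}u + C\rho^\gamma, \qquad \sigma\in(0,1),
\]
with $\gamma>0$ dictated by the exponents in (Hf). Iterating at dyadic scales $2^{-n}\rho$ then produces $|u(t,x)-u(s,y)|\le C(|t-s|^{\alpha/2}+|x-y|)^\epsilon$ on $\bar Q$, where $\epsilon$ is determined by $\sigma$; the peculiar time exponent $\alpha\epsilon/2$ in the statement is precisely this scaling.

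Next I would derive a local Caccioppoli inequality for the truncations $w_\pm:=(u-k)_\pm$. Testing the (Yosida-regularized) weak formulation with $\eta=\varphi^2 w_\pm$ for a spatial cutoff $\varphi$, and applying Theorem \ref{Lpnorm} together with Lemma \ref{fundlemma1} to the nonlocal time term, yields
\[
\partial_t\!\bigl(g_{1-\alpha}\ast\|\varphi w_\pm\|_{L_2(B)}^2\bigr)(t)+\nu\|\varphi\nabla w_\pm\|_{L_2(B)}^2 \le C\|w_\pm|\nabla\varphi|\|_{L_2(B)}^2 + \text{(tail)} + \text{(source)},
\]
after dropping the nonnegative memory term from the identity. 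The source involves $f$ and is absorbed via the Sobolev/Hölder bookkeeping permitted by (Hf); the tail encodes the values of $u$ before the cylinder $Q_\rho$ hit by the convolution $g_{1-\alpha}\ast\cdot$, and it is controlled uniformly by $\|u\|_{L_\infty}+\|u_0\|_{L_\infty}\le C(1+K)$ thanks to Corollary \ref{Linftybound}.

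Third, I would upgrade this Caccioppoli inequality to the key De Giorgi lemma: there is $\mu_0>0$ such that if
\[
\bigl|\{(t,x)\in Q_\rho:\,u(t,x)>M-\delta\}\bigr|\le \mu_0\,|Q_\rho|
\]
for some $M\ge \sup_{Q_\rho}u-\delta$, then $u\le M-\delta/2$ a.e.\ on $Q_{\rho/2}$. The proof is the standard De Giorgi iteration on the levels $k_n=M-\delta/2-\delta\,2^{-n-1}$; the only genuinely fractional ingredient is the mixed Sobolev embedding
\[
L_2((0,T);H^1_2)\cap L_{2/(1-\alpha),\infty}((0,T);L_2)\hookrightarrow L_{p_*},
\]
which is exactly the cross-interpolation already exploited in \eqref{extrareg2} and in the proof of Theorem \ref{GlobalBddness}. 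Applied in turn to $u$ and to $-u$, this lemma feeds a dichotomy on the lower temporal half of $Q_\rho$: the set where $u$ exceeds the midpoint of its range either has density $\le 1/2$ (in which case iterate the lemma on $u$ to improve the supremum on $Q_{\rho/2}$) or density $\ge 1/2$ (iterate it on $-u$ to improve the infimum). Either way one gains a definite fraction of $\mathrm{osc}_{Q_\rho}u$, giving the contraction.

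The hard part will be the nonlocal bookkeeping. Unlike the classical parabolic case, the memory $g_{1-\alpha}\ast[u-u_0]$ at a time $t\in Q_\rho$ sees all past values of $u$, so one cannot naively localize the energy estimate, and the dichotomy has to be phrased as a past-in-time measure condition on a subcylinder strictly below $Q_{\rho/2}$. I would handle this by exploiting the nonnegativity and monotonicity of $g_{1-\alpha}$ to discard the memory term in Lemma \ref{fundlemma1} while preserving the good quadratic form, and by convolving the resulting pointwise-in-$t$ inequality with $g_\alpha$ (using $g_\alpha\ast g_{1-\alpha}=1$) to turn the differential memory estimate into an $L_\infty$-in-$t$ bound valid throughout $Q_\rho$. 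The $L_{2/(1-\alpha)}(L_2)$-regularity from \eqref{extrareg} is what makes the De Giorgi iteration close in this fractional setting; it is precisely the substitute for the classical Nash inequality and is where I expect the most care is needed, in line with \cite{Za,Za1}.
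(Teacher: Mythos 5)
Your skeleton (truncations, Caccioppoli via the fundamental identity, a measure-to-pointwise De Giorgi lemma, a dichotomy giving oscillation contraction) is the right general framework, but the step you yourself flag as ``the hard part'' is handled in a way that does not close, and this is precisely where the time-fractional case differs from the classical one. You propose to control the memory tail --- the contribution of $u(s,\cdot)$ for $s$ below the current cylinder --- ``uniformly by $\|u\|_{L_\infty}+\|u_0\|_{L_\infty}$''. In the oscillation iteration this is fatal: at scale $2^{-n}\rho$ you need every error term in
\[
\operatorname*{osc}_{Q_{2^{-n-1}\rho}}u\le \sigma\operatorname*{osc}_{Q_{2^{-n}\rho}}u+(\mbox{error}_n)
\]
to decay geometrically in $n$, whereas a tail bounded only by the global supremum contributes an $n$-independent constant, and the iterated inequality then gives $\operatorname*{osc}_{Q_{2^{-n}\rho}}u\le \sigma^n\operatorname*{osc}_{Q_\rho}u+C\sum_{k\le n}\sigma^{n-k}\|u\|_\infty$, which does not tend to zero. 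The convolution $g_{1-\alpha}\ast$ always reaches back to $t=0$ (the equation is not translation invariant in time --- note that $\epsilon$ in the statement depends on $\inf_{(\tau,z)\in Q}\tau$ for exactly this reason), so at small scales the history where $u$ still oscillates by the full original amount dominates the test-function identity, and discarding the nonnegative memory term in Lemma \ref{fundlemma1} does not remove it: it reappears through $\partial_t(k\ast H(u))$ evaluated over the short time interval. The paper's proof (from \cite{Za1}) resolves this not by a naive dyadic oscillation scheme but by the method of \emph{non-local growth lemmas} in the spirit of Silvestre \cite{Sil}: the iteration is set up for a renormalized function that is controlled \emph{outside} the current cylinder by a barrier growing slowly into the past, so that the tail contribution is summable across scales.

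A second missing ingredient is the propagation of measure information forward in time. Your dichotomy needs a statement of the form ``if $u$ is below the midlevel on a large portion of an early time slice, then it stays quantitatively below the top level at later times in the cylinder''; in the classical case this comes from logarithmic estimates, and the paper states explicitly that the fundamental identity is used to derive {a priori} bounds for ``certain logarithmic expressions involving $u$''. Your proposal contains no analogue of this step, and the cross-interpolation embedding $L_2(H^1_2)\cap L_{2/(1-\alpha),\infty}(L_2)\hookrightarrow L_{p_*}$ that you invoke (which is indeed the right substitute for the parabolic Sobolev embedding, and is what makes the first De Giorgi lemma work) does not supply it. So the proposal is a reasonable description of the classical blueprint, but it omits the two genuinely fractional devices --- the growth-lemma formulation that absorbs the memory tail, and the logarithmic estimates --- without which the argument fails at the iteration stage.
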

%****************************************************************************
Theorem \ref{interiorHoelder} gives an interior H\"older estimate
for bounded weak solutions of \eqref{divTF2} in terms of the data
and the $L_\infty$-bound of the solution. It can be viewed as the
time fractional analogue of the classical parabolic version
($\alpha=1$) of the celebrated De Giorgi-Nash theorem on the
H\"older continuity of weak solutions to elliptic equations in
divergence form (De Giorgi \cite{DeGiorgi}, Nash \cite{Nash}), see
also \cite{GilTrud} for the elliptic,
and \cite{LSU} as well as the seminal contribution by
Moser \cite{Moser64} for the parabolic case.

The proof of Theorem \ref{interiorHoelder} is quite involved. It uses De
Giorgi's technique and the method of {\em non-local growth
lemmas}, which has been developed in \cite{Sil} for
integro-differential operators like the fractional Laplacian. The
fundamental identity is frequently used to
derive various a priori estimates for $u$ and certain logarithmic
expressions involving $u$.

The following result gives conditions on the data which are
sufficient for H\"older continuity up to the parabolic boundary of $(0,T)\times
\Omega
$. It has been taken from \cite{ZaG}.
%************************************************************************
\begin{satz} \label{Regparaboundary}
Let $\alpha\in (0,1)$, $T>0$, $d\ge 2$, and $\Omega\subset \iR^d$
be a bounded domain with $C^2$-smooth boundary $\partial\Omega$. Let the
assumptions (HA) and (Hf) be satisfied. Suppose further that
\[
u_0\in B_{pp}^{2-\frac{2}{p\alpha}}(\Omega),\quad g\in Y:=
B_{pp}^{\alpha(1-\frac{1}{2p})}((0,T);L_p(\partial\Omega))\cap
L_p((0,T);B_{pp}^{2-\frac{1}{p}}(\partial\Omega))
\]
for some $p>\frac{1}{\alpha}+\frac{d}{2}$, and that the compatibility
condition
\[
u_0|_{\partial\Omega}=g|_{t=0}\quad \mbox{on}\;\,\,\partial\Omega
\]
is satisfied. Then for any bounded weak solution $u$ of
\eqref{divTF2} in $(0,T)\times \Omega$ such that $u=g$ a.e. on $(0,T)\times
\partial\Omega$, there holds
\begin{equation} \label{Regparabound}
[u]_{C^{\frac{\alpha\epsilon}{2},\epsilon}([0,T]\times
\overline{\Omega}\,)}\le
C\Big(|u|_{L_\infty((0,T)\times \Omega)}+|u_0|_{B_{pp}^{2-\frac{2}{p\alpha}}({\Omega})}+|f|_{L_r((0,T);L_q(\Omega))}+|g|_{Y}\Big),
\end{equation}
where
$\epsilon=\epsilon(|A|_\infty,\nu,\alpha,p,r,q,d,\Omega)$ and
$C=C(|A|_\infty,\nu,\alpha,p,r,q,d,\Omega,T)$ are positive constants.
\end{satz}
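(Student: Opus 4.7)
The plan is to subtract from $u$ a sufficiently smooth function $v$ carrying the inhomogeneous initial and boundary data, to establish H\"older continuity of $v$ via maximal $L_p$-regularity and Sobolev embedding, and to apply Theorem \ref{interiorHoelder} together with boundary- and initial-time-adapted extensions to the remainder $w:=u-v$.

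First I would invoke Theorem \ref{MR2}, with $-\Delta$ in place of the general elliptic operator, to construct the unique solution
\[
v\in Z_p:=H^\alpha_p(J;L_p(\Omega))\cap L_p(J;H^2_p(\Omega))
\]
of $\partial_t^\alpha(v-u_0)-\Delta v=0$, $v|_{\partial\Omega}=g$, $v|_{t=0}=u_0$. Conditions (i)--(iv) of that theorem hold thanks to the hypotheses on $u_0$, $g$ and the compatibility condition (note that $p>\frac{1}{\alpha}+\frac{d}{2}\ge\frac{1}{\alpha}+1>\frac{1}{\alpha}+\frac{1}{2}$, so the exceptional value $\alpha=\frac{2}{2p-1}$ is automatically avoided), and one obtains
\[
\|v\|_{Z_p}\le C\big(\|u_0\|_{B_{pp}^{2-2/(p\alpha)}(\Omega)}+\|g\|_Y\big).
\]
Since $p>\frac{1}{\alpha}+\frac{d}{2}$, one may choose $\theta\in\big(\frac{d}{2p},1-\frac{1}{\alpha p}\big)$; the mixed derivative theorem of \cite{Sob} combined with vector-valued Sobolev embedding then gives
\[
Z_p\hookrightarrow H^{\alpha(1-\theta)}_p(J;H^{2\theta}_p(\Omega))\hookrightarrow C^{\alpha\epsilon_0/2,\epsilon_0}([0,T]\times\overline{\Omega})
\]
for some $\epsilon_0>0$ (by matching exponents). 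In particular $v$ is bounded and H\"older continuous up to the parabolic boundary with $\|u_0\|_{B_{pp}^{2-2/(p\alpha)}}+\|g\|_Y$ as bound.

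Next, set $w:=u-v$. Since both $u$ and $v$ are bounded, so is $w$, and subtracting the two weak formulations shows that $w$ satisfies
\[
\partial_t^\alpha w-\mathrm{div}(A\nabla w)=f+\mathrm{div}\big((A-I)\nabla v\big),\qquad w|_{\partial\Omega}=0,\quad w|_{t=0}=0,
\]
where the divergence-form source $F:=(A-I)\nabla v\in L_p((0,T)\times\Omega;\iR^d)$ enters via $\int_0^T\!\int_\Omega (F|\nabla\eta)\,dx\,dt$ in the weak form. On any subset separated from the parabolic boundary, Theorem \ref{interiorHoelder} (whose proof in \cite{Za1} accommodates a divergence-form source essentially without change in the De Giorgi iteration) delivers an interior H\"older estimate for $w$ with some exponent $\epsilon_{\mathrm{int}}>0$ and the correct right-hand side.

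The remaining and decisive step is to extend this estimate up to $(0,T)\times\partial\Omega$ and $\{0\}\times\Omega$. Using the $C^2$-regularity of $\partial\Omega$, I would locally straighten the boundary, extend $w$ by zero across the flattened portion (legitimate since $w(t,\cdot)\in\oH^1_2(\Omega)$), and extend the coefficient matrix by $\nu I$ so that (HA) is preserved. At the initial slice, $w|_{t=0}=0$ together with $(g_{1-\alpha}\ast w)|_{t=0}=0$ permit extending $w$ by zero for $t<0$ without altering $\partial_t^\alpha w$ on $J$. The extended function then weakly solves a problem on an enlarged cylinder in whose interior the original boundary and initial points now lie, so Theorem \ref{interiorHoelder} applies there. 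Taking $\epsilon:=\min\{\epsilon_0,\epsilon_{\mathrm{int}}\}$ and combining the estimates for $v$ and $w$ yields \eqref{Regparabound}. The principal obstacle is precisely this extension argument, or equivalently the derivation of genuinely boundary-adapted versions of the De Giorgi iteration and of the nonlocal growth lemmas of \cite{Sil,Za1}: one must track how straightening transforms $A$ and $F$, preserve uniform (HA) constants, and carefully reconcile the zero-extension in time with the convolution structure of the Riemann--Liouville derivative $\partial_t(g_{1-\alpha}\ast\,\cdot)$.
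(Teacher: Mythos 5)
Your route coincides with the one the paper indicates for this theorem: subtract the maximal-regularity solution $v$ of the Laplacian problem furnished by Theorem \ref{MR2}, obtain H\"older continuity of $v$ up to $[0,T]\times\overline{\Omega}$ from the mixed derivative theorem and Sobolev embedding (your verification that $p>\frac1\alpha+\frac d2$ rules out the exceptional value $\alpha=\frac{2}{2p-1}$ and forces the compatibility condition is correct), and then treat $w=u-v$ by the interior estimate of Theorem \ref{interiorHoelder} combined with extensions in space and time. That is exactly the strategy of \cite{ZaG}.

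Two steps, however, do not work as literally written, and both sit at what you yourself call the principal obstacle. First, the zero extension of $w$ across the flattened boundary is a legitimate $H^1_2$-function but is \emph{not} a weak solution on the enlarged cylinder: the weak formulation for $w$ only admits test functions vanishing on $\partial\Omega$, so for a test function supported in the larger domain the extension produces an uncontrolled surface term (the jump of the conormal derivative $A\nabla w\cdot n$ across $\partial\Omega$). What is actually used is either an odd reflection of $w$ together with a suitable reflection of the coefficients (preserving (HA)), or the weaker but sufficient fact that the zero extensions of the truncations $(w-k)_+$, $k\ge 0$, remain weak \emph{subsolutions} on the enlarged domain, which is all the De Giorgi oscillation argument requires; as stated, your claim that ``the extended function weakly solves a problem on an enlarged cylinder'' is false. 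Second, the equation for $w$ carries the source $f-\Delta v+\mathrm{div}\big(A\nabla v\big)$; since $A$ is merely bounded and measurable, the last term is genuinely of divergence form and is not covered by Theorem \ref{interiorHoelder} as stated, whose hypothesis (Hf) concerns only $f\in L_r((0,T);L_q(\Omega))$. Your parenthetical assertion that the De Giorgi iteration of \cite{Za1} ``accommodates a divergence-form source essentially without change'' is plausible (the extra term is absorbed by Young's inequality in the Caccioppoli estimate, with $A\nabla v$ integrable enough because $p>\frac1\alpha+\frac d2$), but this strengthened interior statement is something that must be proved or precisely cited, not assumed; together with the boundary- and initial-time-adapted growth lemmas it constitutes the actual content of the proof in \cite{ZaG}.
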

%*****************************************************************************
The proof uses Theorem \ref{interiorHoelder} and extension techniques both in space and time, together with the maximal regularity result Theorem \ref{MRRand}. This explains the regularity required for the initial and boundary data.

The regularity condition (Hf) imposed on the right-hand side $f$ in Theorem \ref{interiorHoelder} and Theorem \ref{Regparaboundary} cannot be weakened significantly. In fact, given $f\in L_r((0,T);L_q(\Omega))$
the best possible regularity for the solution $u$ (in general) is that of maximal 
$L_r((0,T);L_q(\Omega))$-regularity, that is
\[
u\in H^\alpha_r((0,T);L_q(\Omega))\cap L_r((0,T);H^2_q(\Omega)). 
\]
By the mixed derivative theorem (cf. \cite{Sob}), we have
\[
u\in H^{\alpha(1-\zeta)}_r((0,T);H^{2\zeta}_q(\Omega))\quad \mbox{for all}\;\zeta\in [0,1].
\]
Now observe that the condition
\[
\frac{1}{\alpha r}\,+\,\frac{d}{2q}\,<1
\]
from (Hf) just ensures the existence of a $\zeta\in (0,1)$ such that $\alpha(1-\zeta)-\frac{1}{r}>\delta$
and $2\zeta -\frac{d}{q}>\delta$ for some $\delta>0$, which implies H\"older continuity of $u$ by
Sobolev embedding.

Another important result in the De Giorgi-Nash-Moser theory for time fractional diffusion equations is the {\em weak Harnack inequality} due to Zacher \cite{ZwH}. To formulate the result, recall that $B(x,r)$ denotes the open ball with
radius $r>0$ centered at $x\in \iR^d$, and $\lambda_d$ stands for the
Lebesgue measure in $\iR^d$. For $\delta\in(0,1)$, $t_0\ge 0$,
$\tau>0$, and a ball $B(x_0,r)$, we define the boxes
\begin{align*}
Q_-(t_0,x_0,r)&=(t_0,t_0+\delta\tau r^{2/\alpha})\times B(x_0,\delta r),\\
Q_+(t_0,x_0,r)&=(t_0+(2-\delta)\tau r^{2/\alpha},t_0+2\tau
r^{2/\alpha})\times B(x_0,\delta r).
\end{align*}
We have now the following result for the equation
\begin{equation} \label{divTF3}
\partial_t^\alpha (u-u_0)-\mbox{div}\,\big(A(t,x)\nabla u\big)=0,\quad t\in (0,T),\,x\in
\Omega.
\end{equation}
%*************************************************************************
\begin{satz} \label{localweakHarnack}
Let $\alpha\in(0,1)$, $T>0$, and $\Omega\subset \iR^d$ be a bounded
domain. Let $u_0\in L_2(\Omega)$ and suppose that the assumption (HA) is satisfied. Let
further $\delta\in(0,1)$, $\eta>1$, and $\tau>0$ be fixed. Then for
any $t_0\ge 0$ and $r>0$ with $t_0+2\tau r^{2/\alpha}\le T$, any
ball $B(x_0,\eta r)\subset\Omega$, any
$0<p<\frac{2+d\alpha}{2+d\alpha-2\alpha}$, and any nonnegative weak
supersolution $u$ of \eqref{divTF3}  in $(0,t_0+2\tau
r^{2/\alpha})\times B(x_0,\eta r)$ with $u_0\ge 0$ in $B(x_0,\eta
r)$, there holds
\begin{equation} \label{localwHarnackF}
\Big(\frac{1}{\lambda_{d+1}\big(Q_-(t_0,x_0,r)\big)}\,\int_{Q_-(t_0,x_0,r)}u^p\,d\lambda_{d+1}\Big)^{1/p}
\le C \einf_{Q_+(t_0,x_0,r)} u,
\end{equation}
where the constant $C=C(\nu,|A|_\infty,\delta,\tau,\eta,\alpha,d,p)$.
\end{satz}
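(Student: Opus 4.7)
The plan is to adapt Moser's scheme for the parabolic weak Harnack inequality to the non-local time setting. Three ingredients are combined: (i) a reverse Moser iteration for negative powers of $u$, producing
\[
\einf_{Q_+}u\;\ge\;c\,\Big(\lambda_{d+1}(Q)^{-1}\int_Q u^{-p_0}\,d\lambda_{d+1}\Big)^{-1/p_0}
\]
on a suitable intermediate cylinder $Q$ and some small $p_0>0$; (ii) a logarithmic estimate controlling the distribution of $\log u$ on the anisotropic cylinders $Q_\pm$ of scaling $t\sim r^{2/\alpha}$; (iii) an abstract Bombieri--Giusti-type crossover lemma that combines (i) and (ii). The critical exponent $(2+d\alpha)/(2+d\alpha-2\alpha)$ originates from the parabolic Sobolev embedding obtained by interpolating the extra regularity \eqref{extrareg} with $L_2((0,T);H^1_2(\Omega))$ and combining with $H^1_2(\Omega)\hookrightarrow L_{2d/(d-2)}(\Omega)$; it reduces to the familiar $(d+2)/d$ in the limit $\alpha=1$.

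For (i), I would replace $u$ by $\bar u:=u+\epsilon$ and test the weak supersolution formulation against $\eta^2\bar u^{-\beta}$ for suitable $\beta>0$, using the Yosida approximations from Section~4 to justify the manipulations. The decisive tool is Corollary \ref{convexFI} applied to the convex function $H(s)=s^{1-\beta}/(\beta-1)$ (for $\beta>1$; the case $\beta\in(0,1)$ is analogous with a sign flip): it bounds $\bar u^{-\beta}\partial_t^\alpha(\bar u-\bar u_0)$ from below by a multiple of $\partial_t^\alpha(\bar u^{1-\beta}-\bar u_0^{1-\beta})$. Combined with the ellipticity assumption (HA) on $A$, this yields a Caccioppoli-type inequality for $\bar u^{(1-\beta)/2}$, after which the Sobolev--interpolation embedding above drives a reverse self-improvement of $L^q$-norms of $\bar u^{-1}$. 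Iterating in Moser fashion on a nested sequence of cylinders shrinking to $Q_+$ gives the bound in (i); finally $\epsilon\to 0$ by monotone convergence.

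For (ii), one tests with $\eta^2/\bar u$ and applies the convexity inequality with the convex function $H(s)=-\log s$, obtaining an energy bound for $\log\bar u$ whose leading term is $\int\eta^2|\nabla\log\bar u|^2\,dx$. The crucial point is that the memory contribution of $\partial_t^\alpha$, which is made visible as the nonnegative remainder in the fundamental identity \eqref{fundidentity}, is not discarded but retained; integrating over $Q_-$ and $Q_+$ and exploiting the temporal gap of order $r^{2/\alpha}$ between them, one derives a bound on super- and sub-level sets of $\log\bar u$ reminiscent of the John--Nirenberg inequality but in anisotropic form. Step (iii) is then an abstract iteration in the spirit of Bombieri--Giusti, adapted to anisotropic cylinders. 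I expect the main obstacle to lie precisely in (ii): forward-propagating supersolution information from $Q_-$ to $Q_+$ across the temporal gap is exactly what dictates the asymmetric geometry of the cylinders and the nonnegativity assumption on $u_0$ on $B(x_0,\eta r)$, and making the formal use of Lemma \ref{FILemma} rigorous for supersolutions of limited regularity requires careful work with the Yosida approximations $B_n u$ together with the monotonicity of $g_{1-\alpha}$.
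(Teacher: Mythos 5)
Your outline coincides with the proof strategy the paper attributes to \cite{ZwH}: \emph{a priori} estimates for powers of $u$ and a logarithmic estimate, both derived from the fundamental identity for the regularized (Yosida-approximated) fractional derivative, assembled via Moser iteration and the Bombieri--Giusti lemma in place of a BMO/John--Nirenberg argument. Your account of the origin of the critical exponent and of the role of the retained memory term and the global nonnegativity assumption also matches the paper's discussion, so this is essentially the same approach.
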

%*********************************************************************************
Theorem \ref{localweakHarnack} says that nonnegative weak
supersolutions of \eqref{divTF3} with $u_0\ge 0$ satisfy a weak form of the Harnack
inequality in the sense that we do not have an estimate for the
supremum of $u$ on $Q_-(t_0,x_0,r)$ but only an $L_p$ estimate. 
It is also shown in \cite{ZwH} that
the critical exponent
$\frac{2+d\alpha}{2+d\alpha-2\alpha}$ is optimal, i.e. the
inequality in general fails to hold for $p\ge
\frac{2+d\alpha}{2+d\alpha-2\alpha}$.

Theorem \ref{localweakHarnack} can be regarded as the time fractional
analogue of the corresponding result in the classical parabolic case
$\alpha=1$, see e.g. \cite[Theorem 6.18]{Lm} and \cite{Trud}.
Sending $\alpha\to 1$, the critical exponent
tends to $1+2/d$, which coincides with the well-known critical exponent for the
heat equation. As pointed out in \cite{ZwH}, the statement of
Theorem \ref{localweakHarnack} remains valid for (appropriately
defined) weak supersolutions of \eqref{divTF3} with $u_0\ge 0$ on $(t_0,t_0+2\tau
r^{2/\alpha})\times B(x_0,\eta r)$ which are nonnegative on
$(0,t_0+2\tau r^{2/\alpha})\times B(x_0,\eta r)$. We also remark that the global
positivity assumption cannot be replaced by a local one, as simple
examples show, cf. \cite{Za3}. This significant difference to the
case $\alpha=1$ is due to the non-local nature of
$\partial_t^\alpha$. The same phenomenon is known for
integro-differential operators like $(-\Delta)^\alpha$ with
$\alpha\in (0,1)$, see e.g. \cite{Kass1}.

The proof of Theorem \ref{localweakHarnack} relies on suitable {\it a priori}
estimates for powers of $u$ and logarithmic estimates, which are derived by means
of the fundamental identity for the
regularized fractional derivative. It further uses Moser's iteration
technique and an elementary but subtle lemma of Bombieri and Giusti
\cite{BomGiu} (see also \cite[Lemma 2.2.6]{SalCoste})
which allows to avoid the rather technically involved
approach via $BMO$-functions.

From the weak Harnack inequality one can easily derive the strong maximum principle for weak subsolutions of \eqref{divTF3}, see \cite[Theorem 5.1]{ZwH}.
%**********************************************************************
\begin{satz} \label{strongmax}
Let $\alpha\in(0,1)$, $T>0$, and $\Omega\subset \iR^d$ be a bounded
domain. Let $u_0\in L_2(\Omega)$ and suppose that the assumption (HA) is satisfied. Let $u\in
W_\alpha$ be a weak subsolution of \eqref{divTF3} in $(0,T)\times \Omega$ and
assume that $0\le \esup_{(0,T)\times \Omega}u<\infty$ and that $\esup_{\Omega}
u_0\le \esup_{(0,T)\times \Omega}u$. Then, if for some cylinder
$Q=(t_0,t_0+\tau r^{2/\alpha})\times B(x_0,r)\subset (0,T)\times \Omega$ with
$t_0,\tau,r>0$ and $\overline{B(x_0,r)}\subset \Omega$ we have
\begin{equation} \label{strrel}
\esup_{Q}u \,=\,\esup_{(0,T)\times \Omega}u,
\end{equation}
the function $u$ is constant on $(0,t_0)\times \Omega$.
\end{satz}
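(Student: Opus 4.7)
The strategy is to pass to the nonnegative weak supersolution $v := M - u$, where $M := \esup_{(0,T)\times\Omega} u$, and propagate its essential vanishing backward in time by repeated application of the weak Harnack inequality (Theorem \ref{localweakHarnack}). The hypotheses $0 \le M < \infty$ and $\esup_\Omega u_0 \le M$ guarantee that $v \ge 0$ a.e.\ on $(0, T) \times \Omega$ and that $v_0 := M - u_0 \ge 0$ a.e.\ in $\Omega$, and since $u$ is a weak subsolution of \eqref{divTF3}, $v$ is a weak supersolution of the same equation with initial datum $v_0 \ge 0$. The condition $\esup_Q u = M$ translates to $\einf_Q v = 0$.

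Without invoking continuity of $u$, a seed vanishing point is located by a density argument. For each $n \in \iN$ the set $\{v < 1/n\} \cap Q$ has positive measure and so admits a Lebesgue density point $(t_n^*, x_n^*) \in Q$; by compactness, a subsequence converges to some $(t^\infty, x^\infty) \in \overline{Q} \subset (0, T) \times \Omega$. For every open neighborhood $N$ of the limit, $(t_n^*, x_n^*) \in N$ for $n$ large, and density gives $|\{v < 1/n\} \cap N| > 0$, so $\einf_N v \le 1/n$; letting $n \to \infty$ shows that $\einf_N v = 0$ on every neighborhood $N$ of $(t^\infty, x^\infty)$.

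One now fixes parameters $\delta \in (0, 1)$, $\eta > 1$, $\tau > 0$, a small $r_1 > 0$, and $y_0 := x^\infty$, together with a suitable $t_1 \ge 0$, so that $(t^\infty, x^\infty) \in Q_+(t_1, y_0, r_1)$, $t_1 + 2\tau r_1^{2/\alpha} \le T$, and $B(y_0, \eta r_1) \subset \Omega$; this is feasible because $t^\infty \ge t_0 > 0$ and $\mathrm{dist}(x^\infty, \partial\Omega) > 0$. Theorem \ref{localweakHarnack} applied to $v$ then yields
\[
\Big(\frac{1}{\lambda_{d+1}(Q_-(t_1, y_0, r_1))} \int_{Q_-(t_1, y_0, r_1)} v^p \, d\lambda_{d+1}\Big)^{1/p} \le C \einf_{Q_+(t_1, y_0, r_1)} v = 0,
\]
so $v = 0$ a.e.\ on the earlier cylinder $Q_-^{(1)} := Q_-(t_1, y_0, r_1)$. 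Every point of $Q_-^{(1)}$ then trivially inherits the neighborhood-essinf-zero property, so the weak Harnack may be reapplied with spatially shifted centers to produce further vanishing cylinders at yet earlier times. A standard finite-chain covering of any compact $K \subset \Omega$ by balls at the weak Harnack scale, together with finitely many backward time steps (each of length of order $\tau r^{2/\alpha}$), propagates $v = 0$ a.e.\ across $(t', t_0) \times K$ for every $t' > 0$; exhausting $K \uparrow \Omega$ and $t' \downarrow 0$ finally gives $v = 0$ a.e.\ on $(0, t_0) \times \Omega$, i.e., $u = M$ there.

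The main technical obstacle is the chain-covering argument in the last step: at each iteration the parameters $(t_1, y_0, r_1)$ must simultaneously satisfy $t_1 \ge 0$, $t_1 + 2\tau r^{2/\alpha} \le T$, and $B(y_0, \eta r) \subset \Omega$, and the spatial shift of order $\delta r$ and backward time step of order $\tau r^{2/\alpha}$ per iteration must be uniformly quantified so that $(0, t_0) \times \Omega$ is exhausted after finitely many applications per compact subset of $\Omega$.
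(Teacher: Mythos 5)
Your overall strategy --- pass to the nonnegative weak supersolution $v=M-u$ with $M=\esup_{(0,T)\times\Omega}u$ and $v_0=M-u_0\ge 0$, locate a point near which the essential infimum of $v$ vanishes, and chain the weak Harnack inequality of Theorem \ref{localweakHarnack} backward in time --- is exactly the route the paper intends: the survey gives no proof and simply refers to \cite[Theorem 5.1]{ZwH}, where this weak-Harnack argument is carried out. The reduction to $v$, the verification of the global nonnegativity hypotheses of Theorem \ref{localweakHarnack}, and the (implicit) observation that $\einf_{Q_+}v=0$ already follows once $Q_+$ merely meets the known null set of $v$ in a set of positive measure are all sound.

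The one place where the sketch, taken literally, does not close is the chaining step you flag at the end. Each spatial displacement of the centre by a distance of order $\delta r$ forces the new box $Q_-$ to lie below the top of the previously established vanishing region by a fixed temporal amount of order $\tau r^{2/\alpha}$ (the gap between $Q_-$ and $Q_+$). A chain of $N$ steps at a \emph{fixed} scale $r$ therefore only yields $v=0$ on $(0,\,b-cN\tau r^{2/\alpha})\times K$, where $b\le t^\infty$ is the top of the seed cylinder; since the seed may sit at $t^\infty=t_0$, ``finitely many applications per compact subset'' at a fixed scale never reaches times arbitrarily close to $t_0$ at points far from $x^\infty$, so the claim that this exhausts $(t',t_0)\times K$ is not correct as stated. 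The repair is the scaling observation that the temporal loss per unit of spatial distance traversed is of order $\tau r^{2/\alpha}/(\delta r)=(\tau/\delta)\,r^{2/\alpha-1}$, which tends to $0$ as $r\to 0$ because $2/\alpha>1$. Thus for each target point $(s,y)\in(0,t_0)\times\Omega$ one picks a path from $x^\infty$ to $y$ inside $\Omega$ and then a scale $r$ so small that all balls $B(\cdot,\eta r)$ along the path lie in $\Omega$, the seed box has temporal height much smaller than $t_0-s$, and the total temporal loss of the chain is below $(t_0-s)/2$; the final $Q_-$ then contains a neighbourhood of $(s,y)$, and a countable dense family of targets finishes the proof. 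A minor further point: your density argument actually yields $\einf_{N\cap Q}v=0$ for every neighbourhood $N$ of $(t^\infty,x^\infty)\in\overline{Q}$, so in the seed step one should arrange $Q_+\supset N\cap Q$ rather than $(t^\infty,x^\infty)\in Q_+$, which matters when the limit point lies on the boundary of $Q$ (e.g.\ at $t^\infty=t_0+\tau r^{2/\alpha}=T$).
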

%**********************************************************************

It is an interesting problem, whether nonnegative weak solutions of \eqref{divTF3} with $u_0\ge 0$, satisfy the (full) Harnack inequality. The latter
 means that \eqref{localwHarnackF}
  holds with $p=\infty$, that is the term on the left is replaced by $\esup_{Q_-} u$.
Very recently, the author and coauthors \cite{DKSZ} observed that in contrast to the classical case $\alpha=1$, the
full Harnack inequality (in the form described before) fails to hold in general in the time fractional case if the space dimension $d$
is at least 2, even in the case where the elliptic operator is the Laplacian. A corresponding counterexample can be found
in \cite{DKSZ}. Its construction uses the fact that for $d\ge 2$ the fundamental solution $Z(t,x)$ of the equation has a singularity at
$x=0$ for all $t>0$. The one-dimensional case is still an open problem. It is conjectured that the Harnack inequality is true in this case. The author could show that the Harnack inequality holds in the purely time dependent case '$d=0$', that is
in the case without elliptic operator, see \cite{Za3}. 

We conclude this section by illustrating the strength of the described regularity results. 
Theorem \ref{Regparaboundary} provides the key estimate to prove the global strong
solvability of the following {\it quasilinear} time fractional diffusion problem
\begin{equation} \label{quasiprob}
\left\{
\begin{array}{r@{\;=\;}l@{\;}l}
%\begin{array}{r@{\,}l@{\,}l}
\partial_t^\alpha (u-u_0)-\mbox{div}\,\big(A(u)\,\nabla u\big) &  f,\quad &
 t\in (0,T),\,x\in \Omega\\
u|_{\partial\Omega} &  g, \quad &  t\in (0,T),\,x\in \partial\Omega\\
u|_{t=0} &  u_0,\quad &  x\in \Omega.
\end{array}
\right.
\end{equation}
Letting $p>d+\frac{2}{\alpha}$ we assume that
\begin{itemize}
\item [{\bf (Q1)}] $\;g\in
B_{pp}^{\alpha(1-\frac{1}{2p})}((0,T);L_p(\partial\Omega))\cap
L_p((0,T);B_{pp}^{2-\frac{1}{p}}(\partial\Omega))$,\\ $\;f\in L_p((0,T);L_p(\Omega))$, $u_0\in
B_{pp}^{2-\frac{2}{p\alpha}}(\Omega)$, and $u_0|_{\partial\Omega}=g|_{t=0}$ on
$\partial\Omega$;
\item [{\bf (Q2)}] $\;A\in C^1(\iR;\mbox{Sym}\{d\})$, and
there exists $\nu>0$ such that $(A(y)\xi|\xi)\ge \nu|\xi|^2$
for all $y\in \iR$ and $\xi\in \iR^d$.
\end{itemize}
Here Sym$\{d\}$ denotes the space of $d$-dimensional real symmetric
matrices. 

The following result has been established in \cite{ZaG}.
%***********************************************************************
\begin{satz} \label{quasilinear}
Let $\Omega\subset \iR^d$ ($d\ge 2$) be a bounded domain with
$C^2$-smooth boundary. Let $\alpha\in (0,1)$, $T>0$ be an
arbitrary number, $p>d+\frac{2}{\alpha}$, and suppose that the
assumptions (Q1) and (Q2) are satisfied. Then the problem
(\ref{quasiprob}) possesses a unique strong solution $u$ in the
class
\[
u\in H^\alpha_p((0,T);L_p(\Omega))\cap L_p((0,T);H^2_p(\Omega)).
\]
\end{satz}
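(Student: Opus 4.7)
The plan is a standard quasilinear continuation scheme: first establish local-in-time strong solvability in the maximal regularity class $Z_{T_1} := H^\alpha_p((0,T_1);L_p(\Omega)) \cap L_p((0,T_1);H^2_p(\Omega))$ by a contraction argument, then exclude finite-time blow-up via the H\"older estimate of Theorem \ref{Regparaboundary} to extend the solution to $[0,T]$. The hypothesis $p>d+\frac{2}{\alpha}$ is exactly what the mixed derivative theorem needs in order to yield $Z_T \hookrightarrow C^{\sigma_1}([0,T];C^{1+\sigma_2}(\bar\Omega))$ for some $\sigma_i>0$; in particular $u$ and $\nabla u$ are bounded and continuous, and by (Q2), $A(u)\in C([0,T]\times\bar\Omega;\mathrm{Sym}\{d\})$ is uniformly elliptic. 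Hence the quasilinear equation may be rewritten in non-divergence form $\partial_t^\alpha(u-u_0)-\mathrm{tr}(A(u)D^2u)-A'(u)|\nabla u|^2=f$, to which the variable-coefficient extension of Theorem \ref{MRRand} mentioned just after Theorem \ref{MR1} applies.

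For local existence, Theorem \ref{MRRand} first produces a lift $u^*\in Z_T$ realising the initial and boundary data, reducing the problem to one for $w=u-u^*$ with zero traces. On a short interval $[0,T_1]$, define $\Phi:v\mapsto w$, where $w\in Z_{T_1}$ is the unique strong solution of the linear non-divergence equation with frozen coefficient matrix $A(v+u^*)$ and right-hand side collecting $f$, the $u^*$-terms, and $A'(v+u^*)|\nabla(v+u^*)|^2$. The Lipschitz bound on $A$ from (Q2), the embedding above, and the smallness of $T_1$ combine to make $\Phi$ a contraction on a small ball in $\{w\in Z_{T_1}:w|_{t=0}=0,\,w|_{\partial\Omega}=0\}$, yielding a unique fixed point. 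Uniqueness on the full interval will follow likewise: the difference of two strong solutions satisfies, on any common time interval, a linear divergence-form equation with $L_\infty$ coefficients and zero data, and a direct energy estimate based on Lemma \ref{fundlemma1} together with a fractional Gronwall inequality forces it to vanish.

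To globalise, let $T^*\in(0,T]$ be maximal such that a strong solution exists on $[0,T^*)$, and suppose for contradiction that $T^*<T$. Such a solution also satisfies the \emph{linear} divergence-form equation \eqref{divTF2} with coefficient $\tilde A(t,x):=A(u(t,x))$ verifying (HA). After a lift to absorb $g$, Corollary \ref{Linftybound} gives a uniform bound $|u|_{L_\infty((0,T^*)\times\Omega)}\le M$ depending only on the data, and then Theorem \ref{Regparaboundary} yields a uniform H\"older estimate $[u]_{C^{\alpha\epsilon/2,\epsilon}([0,T^*]\times\bar\Omega)}\le C$. Since $A\in C^1$, $\tilde A=A(u)$ is then uniformly H\"older continuous on $[0,T^*]\times\bar\Omega$. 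Treating the equation as linear with these coefficients, the H\"older-coefficient version of Theorem \ref{MRRand} bounds $|u|_{Z_{T^*}}\le C'$ independently of $T^*$. The time trace $u(T^*-)\in B_{pp}^{2-2/(p\alpha)}(\Omega)$ is then well-defined, and restarting the contraction argument from $T^*$ contradicts maximality.

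The main obstacle is the variable-coefficient version of Theorem \ref{MRRand} required both in the contraction map (continuous coefficients) and in the bootstrap step (H\"older coefficients). It is obtained from the full- and half-space constant-coefficient results by a localisation/perturbation argument: partition $[0,T^*]\times\bar\Omega$ into small space-time cells, freeze $\tilde A$ at a point in each cell, apply the constant-coefficient estimate, and absorb the perturbation $\mathrm{tr}((\tilde A-\tilde A_0)D^2 u)$ using the small oscillation of $\tilde A$. In the classical parabolic case this is routine, but the nonlocal nature of $\partial_t^\alpha$ complicates the commutator estimates in the time variable and requires the operator-theoretic machinery developed in \cite{Za06}.
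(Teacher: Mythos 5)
The paper itself does not prove this theorem; it cites \cite{ZaG} and indicates only the key ingredient, namely that the global H\"older bound of Theorem \ref{Regparaboundary} is the a priori estimate that prevents finite-time breakdown. Your overall architecture --- local existence by maximal $L_p$-regularity plus contraction, passage to non-divergence form, the observation that $p>d+\tfrac{2}{\alpha}$ is exactly what the mixed derivative theorem needs to embed the solution class into $C^{\sigma_1}([0,T];C^{1+\sigma_2}(\bar\Omega))$, the $L_\infty$ bound from Corollary \ref{Linftybound}, the H\"older bound from Theorem \ref{Regparaboundary} applied to the equation viewed as linear with rough coefficients, and the bootstrap through the linear theory with H\"older-continuous top-order coefficients --- is the strategy of \cite{ZaG}, so the proposal is on the right track and identifies the correct key estimate.

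There is, however, one genuine gap in the globalisation step: you write that the time trace $u(T^{*}-)\in B_{pp}^{2-2/(p\alpha)}(\Omega)$ is well defined and that one may ``restart the contraction argument from $T^{*}$''. For $\partial_t^\alpha$ this semigroup-style continuation does not apply verbatim, because the operator is non-local in time: the equation on $(T^{*},T^{*}+\delta)$ reads $\partial_t\int_0^t g_{1-\alpha}(t-s)[u(s)-u_0]\,ds=\dots$ and involves the entire history of $u$ on $(0,T^{*})$, not just the value $u(T^{*})$. The correct extension argument formulates the problem for the increment on $(T^{*},T^{*}+\delta)$ with the history integral over $(0,T^{*})$ moved to the right-hand side as a known forcing term (whose norm is controlled by the uniform bound $|u|_{Z_{T^{*}}}\le C'$ you have already established), and then runs the contraction for this modified problem; the length $\delta$ of the extension must be shown to depend only on the a priori bounds, not on $T^{*}$. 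This is standard in the Volterra framework of \cite{JanI,Za06} but it is precisely the point where the non-locality in time bites, and it cannot be dismissed in one clause. A second, more minor, point: in the contraction map the nonlinearity $v\mapsto A'(v+u^{*})\left|\nabla(v+u^{*})\right|^2$ is only continuous (not Lipschitz) in the relevant norms under the bare assumption $A\in C^1$, so the self-map/contraction estimates need the uniform continuity of $A'$ on the compact range of $u$ together with the smallness of the time interval; this works but deserves a sentence.
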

%***************************************************************************************
\section{Decay estimates for bounded domains} \label{bdddomain}
%**********************************************************************************
Let $\alpha\in (0,1)$, $\Omega\subset \iR^d$ be a bounded domain, $u_0\in L_2(\Omega)$ and consider
the problem
\begin{equation} \label{weakProb0}
\left\{
\begin{array}{r@{\;=\;}l@{\;}l}
%\begin{array}{r@{\,}l@{\,}l}
\partial_t^\alpha (u-u_0)-\mbox{div}\,(A\,\nabla u) &  0,\quad &
 t>0,\,x\in \Omega\\
u|_{\partial\Omega} &  0, \quad &  t>0,\,x\in \partial\Omega\\
u|_{t=0} &  u_0,\quad &  x\in \Omega,
\end{array}
\right.
\end{equation}
where the coefficient matrix $A=A(t,x)$ is assumed to satisfy the parabolicity condition (HA). From Theorem
\ref{weaksol} we know that \eqref{weakProb0} has a unique weak solution 
$u$ on $(0,T)\times \Omega$ for each $T>0$. In this sense, $u$ is a global (in time) weak solution of \eqref{weakProb0}. 
We are now interested in the long-time behaviour of $u$, in particular in decay estimates for the $L_2(\Omega)$-norm of
$u$.  

Let us first consider the special case $A=I$, i.e.\ the case of the Laplacian.
Let $\{\phi_n\}_{n=1}^\infty
\subset \oH^1_2(\Omega)$ be an orthonormal basis of $L_2(\Omega)$ consisting of eigenfunctions of the negative
Dirichlet Laplacian with eigenvalues $\lambda_n>0$,
$n\in \iN$, and denote by $\lambda_1$ the smallest such eigenvalue.
Further, we define for $\mu\ge 0$ the so-called {\em relaxation function} $s_\mu: [0,\infty) \rightarrow \iR$ as the
solution of the Volterra equation
\begin{equation} \label{smu}
s_\mu(t)+\mu (g_\alpha\ast s_\mu)(t)=1,\quad t\ge 0.
\end{equation}
Note that $s_0\equiv 1$ and that \eqref{smu} is equivalent to the integro-differential equation
\[
\partial_t^\alpha(s_\mu-1)(t)+\mu s_\mu(t)=0,\quad t>0,\quad s_\mu(0)=1.
\]
It is known that for all $\mu\ge 0$ the function $s_\mu$ is positive and nonincreasing, $s_\mu\in H^1_{1,loc}(\iR_+)$,
and $\partial_\mu s_\mu(t)\le 0$; this follows e.g.\ from the theory of completely positive kernels, described in
\cite{JanI}, see also \cite{GLS}. 
Alternatively, one can argue with the well-known formula
\[
s_\mu(t)=E_\alpha(-\mu t^\alpha),\quad \mbox{where}\;\; E_\alpha(z):=\sum_{j=0}^\infty \frac{z^j}
{\Gamma(\alpha j+1)},\,z\in \iC,
\]
is the  Mittag-Leffler function, see e.g.\ \cite{KST}.

Then the solution $u$ of \eqref{weakProb0} with $A=I$ can
be represented via Fourier series as
\begin{equation} \label{uformel}
u(t,x)=\sum_{n=1}^\infty s_{\lambda_n}(t)\,(u_0|\phi_n)\phi_n(x),\quad t\ge 0,\,x\in \Omega,
\end{equation}
where $(\cdot|\cdot)$ stands for the standard inner product in $L_2(\Omega)$, cf.\ \cite[Section 1]{VZ} and \cite[Theorem 4.1]{NSY}. By Parseval's identity and since $\partial_\mu s_\mu\le 0$, it follows from (\ref{uformel}) that
\begin{align*}
|u(t,\cdot)|_{L_2(\Omega)}^2 & =\sum_{n=1}^\infty s_{\lambda_n}^2(t)\,|(u_0|\phi_n)|^2
\le s_{\lambda_1}^2(t) \sum_{n=1}^\infty|(u_0|\phi_n)|^2
= s_{\lambda_1}^2(t) |u_0|_{L_2(\Omega)}^2,
\end{align*}
and thus
\begin{equation} \label{udecay}
|u(t,\cdot)|_{L_2(\Omega)}\le s_{\lambda_1}(t)|u_0|_{L_2(\Omega)},\quad t\ge 0,
\end{equation}
cf.\ \cite{VZ}. This decay estimate is optimal as the example $u_0=\phi_1$ with solution $u(t,x)
=s_{\lambda_1}(t)\phi_1(x)$ shows. It is further known (see e.g.\ \cite[Remark 6.1]{VZ}) that
\[
\frac{1}{1+\mu \Gamma(1-\alpha)t^\alpha }\,\le s_\mu(t)\le \,\frac{1}{1+\mu \Gamma(1+\alpha)^{-1}t^\alpha}\,,\quad
t>0.
\]
This shows that, in contrast to the case $\alpha=1$, where $s_\mu(t)=e^{-\mu t}$, we only have an {\it algebraic decay}
with rate $t^{-\alpha}$ (up to some bounded positive factor) as $t\to \infty$. 

In the general case with {\em rough coefficients} we have the following result due to Vergara and Zacher \cite[Corollary 1.1]{VZ}.
%***************************************************************************
\begin{satz} \label{decayA}
Let $\alpha\in (0,1)$, $\Omega\subset \iR^d$ be a bounded domain, $u_0\in L_2(\Omega)$ and assume that
(HA) is fulfilled. Then the global weak solution $u$ of \eqref{weakProb0} satisfies the estimate
\begin{equation} \label{korresest}
|u(t,\cdot)|_{L_2(\Omega)}\le s_{\nu \lambda_1}(t)\,|u_0|_{L_2(\Omega)},\quad
\mbox{a.a.}\,t>0.
\end{equation}
\end{satz}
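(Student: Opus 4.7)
\medskip

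\noindent\textbf{Proof proposal.}
The plan is to reduce the PDE estimate to an inequality for the scalar function $\varphi(t):=|u(t,\cdot)|_{L_2(\Omega)}$ of the form $\partial_t^\alpha(\varphi-\varphi(0))+\nu\lambda_1\varphi\le 0$, and then to compare $\varphi$ with the solution $s_{\nu\lambda_1}|u_0|_{L_2(\Omega)}$ of the corresponding scalar equation with equality. The main input is the $L_2$-norm inequality (Theorem \ref{Lpnorm}) together with coercivity of $A$ and the variational characterization of $\lambda_1$.

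Since $u\in W$ is not smooth enough to be plugged into the weak formulation of \eqref{weakProb0} as a test function, I would first pass to the regularized (Yosida) formulation, replacing the kernel $g_{1-\alpha}$ by $g_{1-\alpha,n}=ns_n\in H^1_1(J)$, which is nonnegative and nonincreasing, so that $u$ itself becomes an admissible test function in the regularized equation (this is precisely the setting of \cite{Za,VZ1} and the route already used for Theorem \ref{weaksol}). Testing with $u$ and using the $L_2$-norm inequality (Theorem \ref{Lpnorm}) applied to $v=u$, $v_0=u_0$, $k=g_{1-\alpha,n}$, I obtain
\[
\varphi(t)\,\partial_t\bigl(g_{1-\alpha,n}\ast[\varphi-\varphi(0)]\bigr)(t)+\int_\Omega (A\nabla u|\nabla u)\,dx\le 0
\]
for a.a. $t>0$. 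The coercivity condition (HA) together with Poincar\'e's inequality $\lambda_1|u|_{L_2(\Omega)}^2\le |\nabla u|_{L_2(\Omega)}^2$ (valid because $u(t,\cdot)\in \oH^1_2(\Omega)$) turns the second term into a lower bound $\nu\lambda_1\varphi(t)^2$.

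Dividing by $\varphi(t)$ on the set $\{\varphi>0\}$ (and noting the inequality is trivial where $\varphi=0$) yields
\[
\partial_t\bigl(g_{1-\alpha,n}\ast[\varphi-\varphi(0)]\bigr)(t)+\nu\lambda_1\varphi(t)\le 0\quad\text{a.a. }t>0.
\]
Passing to the limit $n\to\infty$ in the Yosida approximation (which corresponds to $g_{1-\alpha,n}\to g_{1-\alpha}$ in a suitable sense, again as in \cite{VZ1,Za}) gives the scalar fractional differential inequality
\[
\partial_t^\alpha(\varphi-\varphi(0))+\nu\lambda_1\varphi\le 0.
\]
Convolving with $g_\alpha$ and using $g_\alpha\ast g_{1-\alpha}=1$ transforms this into the Volterra inequality $\varphi(t)+\nu\lambda_1(g_\alpha\ast\varphi)(t)\le \varphi(0)$, which by the definition \eqref{smu} of $s_{\nu\lambda_1}$ and a standard comparison argument for Volterra equations with completely positive kernel (see \cite{JanI,GLS}) gives $\varphi(t)\le s_{\nu\lambda_1}(t)\varphi(0)$ for a.a. $t>0$, which is \eqref{korresest}.

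The main technical obstacle is the regularization step: $u$ alone cannot be used as a test function in the original weak formulation, so one has to go through the Yosida-regularized equations, apply Theorem \ref{Lpnorm} there (where the kernel $g_{1-\alpha,n}$ is genuinely in $H^1_1$), and then justify the passage to the limit $n\to\infty$ both in the integro-differential term and, crucially, in the division by $\varphi$ (handled by a cut-off/density argument or by working with $\varphi+\varepsilon$ and letting $\varepsilon\downarrow 0$). Everything else --- coercivity, Poincar\'e, and the scalar comparison with $s_{\nu\lambda_1}$ --- is standard once the scalar fractional differential inequality for $\varphi$ has been established.
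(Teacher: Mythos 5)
Your proposal follows essentially the same route as the paper's own argument: test with $u$, apply the $L_2$-norm inequality of Theorem \ref{Lpnorm} together with coercivity and Poincar\'e's inequality to obtain the scalar fractional differential inequality $\partial_t^\alpha(\varphi-\varphi(0))+\nu\lambda_1\varphi\le 0$, and conclude by comparison with $s_{\nu\lambda_1}$; the paper likewise notes that the rigorous version requires a suitable regularization in time, which you correctly identify as the Yosida approximation step. Your write-up is a faithful (and somewhat more detailed) account of the intended proof.
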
 
%*****************************************************************************
Theorem \ref{decayA} shows that the $L_2(\Omega)$-norm of the solution $u(t,\cdot)$ 
decays at least as fast as the relaxation function $s_\mu(t)$ with $\mu=\nu \lambda_1$.
This decay estimate is again optimal as the special case $A=\nu I$ shows, in fact specializing further to $\nu=1$ we recover the estimate (\ref{udecay}).

The proof of Theorem \ref{decayA} is based on energy estimates and the $L_p$-norm inequality, see Theorem \ref{Lpnorm}.
The basic idea is as follows.
Testing (formally) the PDE with $u$, integrating over $\Omega$, and using $A\ge \nu I$ as well as Poincar\'e's inequality we obtain
\[
\int_\Omega u \,\partial_t^\alpha(u-u_0)\,dx +\nu \lambda_1 \int_\Omega |u|^2\,dx\le 0,\quad t>0.
\]
By \eqref{LPU} this implies (with $|u(t)|_{L_2(\Omega)}:=|u(t,\cdot)|_{L_2(\Omega)}$)
\[
|u(t)|_{L_2(\Omega)}\partial_t^\alpha\Big( |u(\cdot)|_{L_2(\Omega)}-|u_0|_{L_2(\Omega)}\Big)(t)+\nu\lambda_1|u(t)|_{L_2(\Omega)}^2\le 0,\quad t>0.
\]
Assuming $|u(t)|_{L_2(\Omega)}>0$ we thus arrive at the fractional differential inequality
\[
\partial_t^\alpha\big(|u|_{L_2(\Omega)}-|u_0|_{L_2(\Omega)}\big)(t)+\nu\lambda_1|u(t)|_{L_2(\Omega)}\le 0,\quad t>0,
\]
which implies (\ref{korresest}), by a comparison principle argument. The rigorous proof in the weak setting requires 
much more effort, in particular, the problem has to be regularized suitably in time.

We point out that Theorem \ref{decayA} can be generalized to a much wider class of subdiffusion equations, which
covers e.g.\ equations of distributed order, see \cite{VZ}.
%***************************************************************************************
\section{Decay estimates in the full space case}
%**********************************************************************************
In this section we consider the classical time fractional diffusion equation in $\iR^d$,
\begin{equation} \label{fracdiff}
\left\{
\begin{array}{r@{\;=\;}l@{\;}l}
%\begin{array}{r@{\,}l@{\,}l}
\partial_t^\alpha (u-u_0)-\Delta u &  0,\quad &
 t>0,\,x\in \iR^d\\
u|_{t=0} &  u_0,\quad &  x\in \iR^d,
\end{array}
\right.
\end{equation}
where again $\alpha\in (0,1)$.
Under appropriate conditions on the initial value $u_0$, the solution of (\ref{fracdiff}) can be represented as
\begin{equation} \label{solformulaTF}
u(t,x)=\int_{\iR^d} Z(t,x-y)u_0(y)\,dy,
\end{equation}
where $Z$ denotes the fundamental solution corresponding to (\ref{fracdiff}),
see \cite{Koch}. It is known (see e.g.\ \cite{Koch90}, \cite{SchnWyss}) that
\[
Z(t,x)=\pi^{-\frac{d}{2}}t^{\alpha-1}|x|^{-d} H^{20}_{12}\big(\frac{1}{4}|x|^2t^{-\alpha}
\big|^{(\alpha,\alpha)}_{(d/2,1), (1,1)}\big),\quad t>0,\,x\in \iR^d\setminus\{0\},
\]
where $H$ denotes the Fox $H$-function (\cite{KST,KiSa}). $Z(t,x)$ is nonnegative and $|Z(t,\cdot)|_{L_1(\iR^d)}=1$
for all $t>0$, see e.g.\ \cite[Section 2]{KSVZ}.

In what follows we write $f \star g$ for the convolution in $\iR^d$ of the functions $f,g$. Given $u_0\in L_2(\iR^d)$ we do not have in general any decay for $|Z(t,\cdot)\star u_0|_{L_2(\iR^d)}$, like in the case of the heat equation ($\alpha=1$). Now suppose that $u_0\in L_2(\iR^d)\cap L_1(\iR^d)$. Then it is well-known that $u(t,\cdot):=Z_H(t,\cdot)\star u_0$, where $Z_H$
denotes the classical heat kernel, decays in the $L_2$-norm as
\[
|u(t,\cdot)|_2 \lesssim t^{-\frac{d}{4}},\quad t>0,
\]
and this estimate is the best one can obtain in general (see e.g.\ \cite{BS}). Here $|v|_2:=|v|_{L_2(\iR^d)}$, and $v(t)\lesssim w(t),\,t>0$ means that there exists a constant $C>0$ such that $v(t)\le Cw(t)$, $t>0$. In the case of time fractional diffusion 
we have the following surprising result, cf.\ \cite[Corollary 3.2, Theorem 4.1]{KSVZ}.
%*********************************************************************************
\begin{satz} \label{decayfull2}
Let $d\in \iN$ and $u_0\in L_1(\iR^d)\cap L_2(\iR^d)$ and $u(t)=Z(t)\star u_0$. Then
\begin{align}
|u(t)|_2\,& \lesssim\,t^{-\min\{\frac{\alpha d}{4},\alpha\}},\quad t>0,\;d\in \iN\setminus\{4\}, \label{L2decay}\\
|u(t)|_{2,\infty} & \lesssim\,t^{-\alpha},\quad t>0,\;d=4.\nonumber
\end{align}
Moreover, the estimate (\ref{L2decay}) is the best one can get in general.
\end{satz}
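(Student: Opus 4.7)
My plan is to move to Fourier variables. Applying the spatial Fourier transform to \eqref{fracdiff} yields, for each $\xi\in\iR^d$, a scalar time-fractional ODE whose solution is $\hat u(t,\xi)=s_{|\xi|^2}(t)\hat u_0(\xi)=E_\alpha(-|\xi|^2 t^\alpha)\hat u_0(\xi)$, so that by Plancherel
\[
|u(t)|_2^2=\int_{\iR^d}|s_{|\xi|^2}(t)|^2\,|\hat u_0(\xi)|^2\,d\xi.
\]
The whole analysis then rests on the two-sided pointwise bound $\frac{c}{1+\mu t^\alpha}\le s_\mu(t)\le \frac{C}{1+\mu t^\alpha}$ recalled in Section \ref{bdddomain}, together with $|\hat u_0|_{L_\infty}\le |u_0|_{L_1}$ and Plancherel $|\hat u_0|_{L_2}=|u_0|_{L_2}$.

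For the upper bound with $d\le 3$ I would use the $L_\infty$-bound on $\hat u_0$ globally: after the rescaling $\eta=t^{\alpha/2}\xi$,
\[
|u(t)|_2^2\le C|u_0|_{L_1}^2\int_{\iR^d}\frac{d\xi}{(1+|\xi|^2 t^\alpha)^2}=C'|u_0|_{L_1}^2\,t^{-\alpha d/2}\int_{\iR^d}\frac{d\eta}{(1+|\eta|^2)^2},
\]
and the $\eta$-integral is finite precisely when $d<4$, so $|u(t)|_2\lesssim t^{-\alpha d/4}$. For $d\ge 5$ this global strategy fails, so I would split at the natural threshold $R=t^{-\alpha/2}$. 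On $\{|\xi|\le R\}$ the trivial bound $|s|\le 1$ combined with $|\hat u_0|_{L_\infty}\le |u_0|_{L_1}$ gives a contribution of order $c_d|u_0|_{L_1}^2 R^d=c_d|u_0|_{L_1}^2 t^{-\alpha d/2}\le Ct^{-2\alpha}$ (using $d\ge 4$). On $\{|\xi|>R\}$ I would insert $|s_{|\xi|^2}(t)|^2\le C|\xi|^{-4}t^{-2\alpha}$ and split further at $|\xi|=1$: the tail $|\xi|\ge 1$ is controlled by $t^{-2\alpha}|u_0|_{L_2}^2$ via Plancherel, while the shell $R<|\xi|<1$ is controlled by $t^{-2\alpha}|u_0|_{L_1}^2\int_R^1 r^{d-5}\,dr$, which stays bounded precisely when $d>4$. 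Summing yields $|u(t)|_2\lesssim t^{-\alpha}$.

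The borderline dimension $d=4$ produces a logarithmic defect in the shell integral above, and this blocks $t^{-\alpha}$ decay in $L_2(\iR^4)$. To recover the rate in the weak space I would exploit the self-similar representation
\[
Z(t,x)=t^{-\alpha d/2}F(t^{-\alpha/2}x),\qquad F(y)=(2\pi)^{-d}\int_{\iR^d}e^{iy\cdot\eta}E_\alpha(-|\eta|^2)\,d\eta,
\]
observing that the Fourier-side tail $E_\alpha(-|\eta|^2)\sim|\eta|^{-2}/\Gamma(1-\alpha)$ translates into an origin singularity $|F(y)|\lesssim |y|^{-(d-2)}$; for $d=4$ this is of order $|y|^{-2}$, hence $F\in L_{2,\infty}(\iR^4)$. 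The scaling of weak norms then gives $|Z(t)|_{L_{2,\infty}(\iR^4)}\lesssim t^{-\alpha}$, and the weak Young inequality $|f\star g|_{L_{2,\infty}}\lesssim |f|_{L_{2,\infty}}|g|_{L_1}$ delivers the claim.

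For the optimality statement I would saturate each regime by explicit choices of $u_0$. If $d<4$, take $\hat u_0$ smooth, compactly supported and equal to $1$ in a neighborhood of the origin; the matching lower bound $s_\mu(t)\ge c(1+\mu t^\alpha)^{-1}$ combined with the same rescaling yields $|u(t)|_2^2\gtrsim t^{-\alpha d/2}$. If $d>4$, take $\hat u_0$ supported and bounded below on a spherical shell $\{1\le|\xi|\le 2\}$; the lower bound on $s$ then gives $|u(t)|_2^2\gtrsim t^{-2\alpha}$. I expect the main obstacle to be the critical dimension $d=4$: one has to leave the Hilbert framework and carefully establish $F\in L_{2,\infty}(\iR^4)$ using both the near-origin singularity and the decay at infinity of $F$ (the latter accessible via the $H$-function representation of $Z$ recalled earlier). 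The remaining cases are a Fourier-analytic balancing act that is technically routine once the pointwise bound on $s_\mu$ and the self-similar scaling of $Z$ are in hand.
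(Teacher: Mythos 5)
Your proposal is correct and follows the route the paper itself describes: the paper names exactly two methods (sharp $L_p$- and weak-$L_p$-estimates for $Z(t,\cdot)$ plus Young's inequality, or alternatively Plancherel's theorem using $\widehat{Z}(t,\xi)=s_{|\xi|^2}(t)$ together with the two-sided bound on $s_\mu$), and you implement the Plancherel route for $d\neq 4$ and the kernel weak-norm route for the critical dimension $d=4$, with the optimality examples saturating the two-sided bound on $s_\mu$ exactly as in \cite{KSVZ}. The only cosmetic point is that your comparisons like $t^{-\alpha d/2}\le Ct^{-2\alpha}$ for $d\ge 4$ hold for $t\ge 1$, with small times covered by the trivial bound $|u(t)|_2\le |u_0|_2$.
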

%***********************************************************************************
Whereas in the case $\alpha=1$ the decay rate increases with the dimension $d$, time fractional diffusion leads to the
phenomenon of a {\em critical dimension}, which is $d=4$ in this case. Below the critical dimension the rate increases with
$d$, the exponent being $\alpha$ times the one from the heat equation, while above the critical dimension the decay rate
is the same for all $d$, namely $t^{-\alpha}$. The reason why the decay rate does not increase any further with $d$ lies in the fact that $t^{-\alpha}$ (up to a constant) coincides with the decay rate in the case of a bounded domain and homogeneous Dirichlet boundary condition, see Section \ref{bdddomain}. This also shows that for $\alpha\in (0,1)$ the diffusion is so slow that in higher dimensions ($d$ above the critical dimension) restriction to a bounded domain and the requirement of a homogeneous Dirichlet boundary condition do not improve the rate of decay. This is markedly different in the classical diffusion case, where we always have exponential (and thus a better) decay in the case of a bounded domain.

The decay rates in Theorem \ref{decayfull2} can be proved in different ways, cf.\ \cite{KSVZ}. Using the analytic and asymptotic properties of $H$ (see e.g.\ \cite{Koch,Koch90}), which is a rather complicated object, one can derive sharp $L_p(\iR^d)$-estimates for $Z(t,\cdot)$ for $t>0$ and all
$1\le p<\kappa(d)$, where $\kappa(d):=d/(d-2)$, $d\ge 3$, and $\kappa(1)=\kappa(2)=\infty$. For $d\ge 3$ one also
finds that $|Z(t,\cdot)|_{\kappa(d),\infty} \lesssim\,t^{-\alpha}$. These estimates and Young's inequality for convolutions
then yield the desired decay rates. Alternatively, one can employ tools from Harmonic Analysis such as Plancherel's theorem
and argue with properties of the Fourier transform of $Z(t,\cdot)$, which coincides with $s_{|\xi|^2}(t)$ (up to a constant,
depending on the used definition of the Fourier transform), cf. Section \ref{bdddomain} for the definition of the relaxation
function $s_\mu(t)$.

Theorem \ref{decayfull2} is only a special case of more general results obtained in \cite{KSVZ}, which also provide
decay rates for the $L_p$-norm and allow for a wider class of subdiffusion equations. 

The subsequent result states that for integrable initial data $u_0$ the asymptotic behaviour of  $Z(t)\star u_0$
is described by a multiple of $Z(t,x)$, see \cite[Theorem 3.6]{KSVZ}. We set $\kappa_1(d):=d/(d-1)$, $d\ge 2$, and
$\kappa_1(1):=\infty$.
%*****************************************************************************
\begin{satz} Let $d\in \iN$ and $1\le p<\kappa_1(d)$. Let further $u_0\in L_1(\iR^d)$ and set $M=\int_{\iR^d} u_0(y)\,dy$.

\noindent (i) There holds
\[
t^{\frac{\alpha d }{2}\,\left(1-\frac{1}{p}\right)}|u(t)-MZ(t)|_{p}\rightarrow 0,\quad\mbox{as}\;t\rightarrow \infty.
\]
(ii) Assume in addition that $||x|u_0|_1<\infty$. Then
\[
t^{\frac{\alpha d }{2}\,\left(1-\frac{1}{p}\right)}|u(t)-MZ(t)|_{p}\lesssim t^{ -\frac{\alpha}{2}},\quad t>0.
\]
Moreover, in the limit case $p=\kappa_1(d)$ we have
\[
t^{\frac{\alpha }{2}}|u(t)-MZ(t)|_{\kappa_1(d),\,\infty}\lesssim t^{ -\frac{\alpha}{2}},\quad t>0.
\]
\end{satz}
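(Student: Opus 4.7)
The starting point is the identity
\[ u(t,x) - MZ(t,x) \,=\, \int_{\iR^d} \big[Z(t,x-y)-Z(t,x)\big]\,u_0(y)\,dy, \]
obtained by subtracting $MZ(t,x)=Z(t,x)\int_{\iR^d} u_0$ from \eqref{solformulaTF}. The decisive auxiliary fact is the self-similar structure $Z(t,x)=t^{-\alpha d/2}\Phi(xt^{-\alpha/2})$, which follows from the scaling invariance $v(t,x):=u(\lambda^{2/\alpha}t,\lambda x)$ of \eqref{fracdiff}. Combined with the $L_p$-estimates for $Z(t)$ mentioned in the preceding discussion, an elementary change of variable yields
\[ |\nabla Z(t)|_p \,\lesssim\, t^{-\alpha/2}\, t^{-\frac{\alpha d}{2}(1-\frac{1}{p})}, \qquad 1\le p<\kappa_1(d), \]
together with the analogous weak-type bound at the endpoint $p=\kappa_1(d)$.

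For part (ii), I would use $Z(t,x-y)-Z(t,x)=-\int_0^1 y\cdot\nabla Z(t,x-sy)\,ds$ and apply Minkowski's inequality in $L_p(\iR^d_x)$ to obtain
\[ |u(t)-MZ(t)|_p \,\le\, \int_{\iR^d}|y|\,|u_0(y)|\,|\nabla Z(t)|_p\,dy \,=\, |\,|x|u_0\,|_1\cdot|\nabla Z(t)|_p. \]
Plugging in the bound on $|\nabla Z(t)|_p$ and multiplying by $t^{\frac{\alpha d}{2}(1-1/p)}$ produces exactly the rate $t^{-\alpha/2}$. At the endpoint $p=\kappa_1(d)$ one replaces Young's inequality by its Lorentz-space analogue, using that convolution with an $L_1$-function maps $L_{\kappa_1(d),\infty}$ into itself with comparable norm, and applies the weak-type estimate for $\nabla Z(t)$.

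For part (i), the idea is density. Given $\varepsilon>0$, choose $u_0^\varepsilon\in C_c(\iR^d)$ with $|u_0-u_0^\varepsilon|_1<\varepsilon$ and set $M^\varepsilon:=\int u_0^\varepsilon$. Decompose
\[ u(t)-MZ(t) \,=\, \bigl(Z(t)\star u_0^\varepsilon - M^\varepsilon Z(t)\bigr) + Z(t)\star(u_0-u_0^\varepsilon) - (M-M^\varepsilon)Z(t). \]
Part (ii), applied to the compactly supported $u_0^\varepsilon$, controls the first bracket with the faster rate $t^{-\alpha/2}t^{-\frac{\alpha d}{2}(1-1/p)}$, while the remaining two terms are bounded in $L_p$ by $C\varepsilon\,|Z(t)|_p\lesssim\varepsilon\,t^{-\frac{\alpha d}{2}(1-1/p)}$ thanks to the $L_p$-estimate for $Z(t)$ (recalled in the paragraph preceding the theorem). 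Multiplying throughout by $t^{\frac{\alpha d}{2}(1-1/p)}$, passing to $\limsup_{t\to\infty}$, and then letting $\varepsilon\to 0$ yields the asserted convergence to $0$.

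The principal obstacle is establishing the $L_p$-estimate for $\nabla Z(t)$ over the full range $1\le p<\kappa_1(d)$ along with its sharp weak-type counterpart at $p=\kappa_1(d)$. The reduction of the admissible exponent from $\kappa(d)$ to $\kappa_1(d)$ reflects the fact that differentiating $Z$ worsens its pointwise singularity at the origin (from $|x|^{-(d-2)}$ to $|x|^{-(d-1)}$ when $d\ge 3$); a rigorous verification therefore requires either exploiting the asymptotics of the Fox $H$-function representation of $Z$, or a Fourier-side argument based on $\widehat{Z}(t,\xi)=s_{|\xi|^2}(t)$ together with an interpolation argument to handle the endpoint.
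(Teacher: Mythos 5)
The paper does not actually prove this theorem but only cites \cite{KSVZ}; your argument --- writing $u(t,x)-MZ(t,x)=\int(Z(t,x-y)-Z(t,x))u_0(y)\,dy$, Taylor-expanding, applying Minkowski's integral inequality, reducing everything via self-similarity to $L_p$- and weak-$L_{\kappa_1(d)}$-bounds for $\nabla Z(t)$, and handling (i) by density --- is correct and is essentially the proof given in that reference. You also rightly isolate the only genuinely nontrivial ingredient, namely the gradient estimate $|\nabla Z(t)|_p\lesssim t^{-\frac{\alpha}{2}-\frac{\alpha d}{2}(1-\frac{1}{p})}$ for $1\le p<\kappa_1(d)$ together with its weak-type endpoint version, which in \cite{KSVZ} is obtained from the Fox $H$-function asymptotics exactly as you indicate.
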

%******************************************************************************

\medskip

$\mbox{}$

\noindent {\bf Rico Zacher}, Ulm University, Institute of Applied Analysis, 89069 Ulm, Germany,
e-mail: rico.zacher@uni-ulm.de

\end{document}